\documentclass[12pt]{article}
\usepackage{geometry,amssymb,amsthm,amsmath,
graphics,enumerate}

\let\oldthebibliography\thebibliography
\renewcommand{\thebibliography}[1]{%
  \oldthebibliography{#1}%
  \setlength{\itemsep}{0pt}%
  \setlength{\parskip}{0pt}%
}

\usepackage{xcolor}
\usepackage{times}
\usepackage{amsfonts}
\usepackage{amscd}
\usepackage{url}
\usepackage{showlabels}

  \usepackage[normalem]{ulem}
\usepackage{soul}
\usepackage{hyperref}

\newcommand{\sidebar}[1]{\vskip10pt\noindent
 \hskip.70truein\vrule width2.0pt\hskip.5em
 \vbox{\hsize= 4truein\noindent\footnotesize\relax #1 }\vskip10pt\noindent}

\newbox\smilebox
\newbox\anchorbox
\newbox\noanchorbox
\newbox\tempbox

\setbox\smilebox=\hbox{$\smile$}

\def\anchor{\hbox{\vtop{
           \hbox to \wd\smilebox{\hfil\vrule width.4pt height7pt depth1pt\hfil}
           \vskip  -11.5truept
           \hbox to \wd\smilebox{\hfil$\smile$\hfil}}}}
\setbox\anchorbox=\anchor
\def\noanchor{\hbox{\vtop{
           \hbox to \wd\anchorbox{\hfil\anchor\hfil}
           \vskip -14truept
           \hbox to \wd\anchorbox{\hfil/\hfil}}}}
\setbox\noanchorbox=\noanchor

\def\fg#1#2#3{\setbox\tempbox=\hbox{$\scriptstyle{#2}$}
\ifnum\wd\anchorbox>\wd\tempbox\dimen255=\wd\anchorbox
\else\dimen255=\wd\tempbox\fi
{#1\,\vtop{\hbox to \dimen255{\hfil\anchor\hfil}
           \vskip -6truept
           \hbox to \dimen255{\hfil$\scriptstyle{#2}$\hfil}}
           \,#3}}

\def\nfg#1#2#3{\setbox\tempbox=\hbox{$\scriptstyle{#2}$}
\ifnum\wd\noanchorbox>\wd\tempbox\dimen255=\wd\noanchorbox
\else\dimen255=\wd\tempbox\fi
{#1\,\vtop{\hbox to \dimen255{\hfil\noanchor\hfil}
           \vskip -6truept
           \hbox to \dimen255{\hfil$\scriptstyle{#2}$\hfil}}
           \,#3}}

\setbox1=\hbox{$\bot$}

\def\north#1#2{#1\,
\hbox{$\bot$\llap {\hbox to\wd1 {\hfil $/$\hfil}}}
\,#2}

\def\nao#1#2#3{#1\  \hbox{\vtop{
\baselineskip=4pt
\hbox{$\bot$\llap {\hbox to\wd1 {\hfil $/$\hfil}}
\hskip .05em \llap{\hbox{$^{\scriptscriptstyle{a}}$}}}\hbox{$\scriptstyle
{#2}$}}}\, #3}

\def\abar{\overline{a}}
\def\bbar{\overline{b}}

\def\dbar{\overline{d}}

\def\hbar{\overline{h}}

\def\mbar{\overline{m}}

\def\vbar{\overline{v}}

\def\ybar{\overline{y}}

\def\Qbar{\bf Q}
\def\Rbar{\bf R}

\def\tp{{\rm tp}}

\def\bK{{\bf K}}

\def\C{{\mathfrak  C}}

\def\FF{{\bf F}}
\def\C{{\cal  C}}

\def\FF{{\bf F}}

\def\K{{\bf K}}

\def\Z{{\mathbb Z}}

\def\RR{{\mathbb R}}
\def\tp{{\rm tp}}

\def\acl{{\rm acl}}
\def\dcl{{\rm dcl}}

\def\Fa0{{\FF^a_{\aleph_0}}}

\def\<{\langle}
\def\th{{\mathop {Th}}}

\def\>{\rangle}

\def\Lscr{\mathcal L}

\def\phi{\varphi}
\def\PA{PA}

\def\true{true}
\def\Const{Const}
\def\C_{C_}

\def\DN_{DN}

\makeatother
\newtheorem{theorem}{Theorem}[section]
\newtheorem{assumption}[theorem]{Assumption}
\newtheorem{proposition}[theorem]{Proposition}
\newtheorem{definition}[theorem]{Definition}
\newtheorem{notation}[theorem]{Notation}
\newtheorem{remark}[theorem]{Remark}
\newtheorem{example}[theorem]{Example}
\newtheorem{lemma}[theorem]{Lemma}
\newtheorem{corollary}[theorem]{Corollary}
\newtheorem{fact}[theorem]{Fact}

\newcommand{\qtp}{\mathrm{qtp}}

\begin{document}

\author{John T. Baldwin, Constantin C. Br\^incu\c s}

\title{Categoricity for an inferential $\omega$-logic and in $L_{\omega_1,\omega}$}
\maketitle 

\begin{abstract} This paper provides two extensions of first order logic by `$\omega$-rules'. In each case we characterize the countable structures whose theory in the logic is categorical (has a unique model). In the one-sorted inferential $\omega$-logic, both Robinson's system $Q$ and Peano Arithmetic become categorical. In the two-sorted generalized 
$\omega$-logic we show
 each complete $L_{\omega_1,\omega}$ sentence defines the same class of structures
as a first-order theory with the appropriate $G-\omega$-rule. The results depend on proving that the inferential rules for the logics are categorical, i.e. they uniquely determine certain truth-conditions for the logical connectives and quantifiers.

KEYWORDS: categoricity, inferentialism, first-order logic, first-order theories, $\omega$-rules, $L_{\omega_1,\omega}$.\end{abstract}

\section{ Introduction}\label{intro}
This paper exploits diverse meanings of `categoricity' in disparate areas of logic.
On one hand, there is the conviction that the theory of canonical structures like arithmetic or the real numbers
ought to be axiomatized by a categorical theory  (in some logic), one that has a unique model (up to isomorphism).
On the other, logical inferentialists argue that the truth conditions of the logical operators should be determined by
the rules of inference; that is, logical calculi should be categorical in a different sense (Definition~\ref{langdef}.3b). However, as early as \cite{CarnapFor}, Carnap gave conflicting assignments of truth values for 
propositional  and quantified formulas that each satisfied the same rules of inference.

Since at least Benacerraf and Putnam, the notion of determinacy of reference for canonical mathematical
structures has been a central topic in the philosophy of mathematics.  One road has been the structuralist attempt to avoid the entire (even naive) set theoretic notion of model. Most attention has focused on choosing a 
logic whose theory of that intuitively well-conceived  structure has a unique model.

Naturally, Dedekind's second order axiomatization of arithmetic \cite{Dedekind} is the standard to measure against. Our discussion of the philosophical implications of these results in \cite{BaldwinBrincusII} builds on \cite[Part B]{ButtonWalshbook} which critiques both second-order and 
 weaker logics' abilities for this task.  First-order logic is properly dismissed immediately because of the 
upward L\"{o}wenheim-Skolem Tarski theorem. 
 
The major innovations of this paper  provide  modified
 inferential definitions of the $\omega$-rule that extend  $L_{\omega,\omega}$ and categorically characterize countable structures as in $L_{\omega_1,\omega}$.

{\em The Inferential $\omega$-rule} (the $I-\omega$-rule; Definition~\ref{ourinfrule})  allows (putative) non-standard parameters to appear in  the formula in its hypothesis. Adding this rule yields a categorical interpretation of the universal quantifier. Further, we  show Theorem~\ref{wrulebworks}: Peano arithmetic (even more, Robinson's system $\Qbar$) is categorical with
 our $I-\omega$-rule. 

 Working in two-sorted $G-\omega$-logic ($G$ for generalized) we have Theorem~\ref{omegaruleatomic2}: Each 
$L_{\omega_1,\omega}(\tau)$-complete (Scott; Definition~\ref{mathback}.1)  sentence $\phi$ is `structurally equivalent with' (same $\tau$-models as) a pseudo-elementary class (the relativized $\tau$-reducts of an associated first-order theory ${\hat T}^\phi$ in the $G-\omega$-logic).  Thus, such a sentence is categorical if and only if the associated ${\hat T}^\phi$   is in the $G-\omega$-logic.
 Thus, the philosophical issue of the existence of infinite formulas is reduced to the issue of the reliability of infinitary proofs in ordinary first order logic. For each of our rules,  we characterize those countable structures that admit a categorical description in the infinitary logic.

We structure the paper as follows: in 
Section~\ref{rules valuations} we set up the terminological and conceptual settings 
of the paper and introduce the main ideas of an inferentialist approach to logic. In Section~\ref{subquant} we introduce a  much more expressive  $\omega$-logic, $I-\omega$-logic, and prove that its rules of inference are categorical. We then prove that $PA$ plus the $I-\omega$-rule, $\PA^{\omega}$, is categorical.
In Section~\ref{infinitary},  relying on certain results of Scott, Chang, Vaught,  and Morley we obtain the characterization of categoricity in $L_{\omega_1,\omega}$ described above.

\section{Rules, Valuations and Logical Inferentialism}\label{rules valuations}
\stepcounter{subsection}

From a logical inferentialist point of view,
the formal rules of inference  in a given deductive system and vocabulary 
should uniquely determine the meanings (i.e. the truth-conditions)
of the logical connectives and quantifiers from a language.
That is, an admissible (sound) valuation on a set of sentences makes exactly the same sentences true as those true
in associated structure (Lemma~\ref{modval}) with the standard Robinson semantics used throughout this paper (Notation ~\ref{langdef}.2).




\begin{definition}[Vocabularies and structures]\label{str}
\begin{enumerate}
 \item By a  vocabulary $\tau$ we mean a set of relation, and function symbols with prescribed arity. A $0$-ary
function is called a constant. 
These are not logical symbols,
but vary with the topic being formalized. For any vocabulary $\tau$, a $\tau$-structure $M$\footnote{More pedantically one should denote the structure by a different font, e.g. 
$\mathfrak{M}$. Following current model theoretic notation, we simply overuse $M$. Model theorists typically write $M$ is a `model' of $T$, and `$\tau$- structure' when not dealing with a particular theory.}  
consists of a domain $M$  equipped with: for
each $n$-ary relation symbol $R$  in $\tau$ a subset $R^M \subseteq M^n $, the sequences that satisfy $R$,
and for each $n$-function symbol $f$ a function $f^M$ mapping $M^n$ into $M$.

\item Our theories are all formulated starting from a countable base vocabulary $\tau$. To define truth on an uncountable structure $M$, we allow uncountably many auxiliary constants naming the elements of $M$
(Definition~\ref{varcon}) in an expanded vocabulary $\tau_M$. By the downward L{\"o}wenheim-Skolem theorem  our arguments can take place in countable models so each inference rule invokes only countable sets of constants.
\end{enumerate}
\end{definition}

\begin{definition}[Permissible Valuations] \label{trv} {\rm
\begin{enumerate} 
\item A {\em valuation} is a function from a set $\Sigma$ of sentences to truth values.

\item A {valuation} is {\em permissible} if it takes 
values on all quantifier free sentences in a given vocabulary, the resulting set of 
sentences is consistent\footnote{In particular, this prevents three sentences 
$f(a) =b, f(a) =c, b \neq c$ all being given the value true.}, and it preserves the soundness of the inference rules for identity and for propositional logic.

\end{enumerate}}
\end{definition}

Permissibility significantly narrows \cite{Garson}'s general notion of valuation
as any function from sentences to truth values.
In fact, since each permissible valuation $v$ gives a 
 structure $M$ exactly when for any  
$\abar = \langle a_0, \ldots a_n\rangle$, $\abar \in R^M$ if and only if $ v(R(\abar))= true$
and similarly for functions, we have:

 \begin{lemma}\label{modval} Each
 permissible valuation $v$ gives a $\tau$-structure for the vocabulary $\tau$ whose symbols appear in the domain of $v$. All permissible  valuations  on a $\tau$-structure  $M$ give the same values to the quantifier free sentences of $\tau$.

 \end{lemma}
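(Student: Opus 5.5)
The plan is to build the structure directly out of the valuation, in the style of a term model from Henkin's completeness proof, and then check that the two parts of the statement follow from permissibility. Let $v$ be a permissible valuation. Its domain includes every quantifier free sentence in the vocabulary $\tau$ (with whatever constants occur). The domain of the structure will be the set of closed terms $T$ of that vocabulary modulo the relation $s \sim t$ iff $v(s=t)=\true$.

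First I would show that $\sim$ is a congruence. Reflexivity, symmetry and transitivity come from permissibility: $v$ preserves the soundness of the identity rules (reflexivity and substitution of identicals) and of propositional logic, so from $v(s=t)=\true$ and $v(t=r)=\true$ we get $v(s=r)=\true$. Otherwise the set of true sentences together with $s\neq r$ would be inconsistent, which Definition~\ref{trv} forbids. By the same substitution rule, if $v(s_i=t_i)=\true$ for all $i$, then $v(f(\bar s)=f(\bar t))=\true$, and $v(R(\bar s))=v(R(\bar t))$.

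Second, I would define the $\tau$-structure $M_v$ on $T/{\sim}$ exactly as in the sentence preceding the lemma:
\begin{itemize}
\item $\langle [s_1],\ldots,[s_n]\rangle \in R^{M_v}$ iff $v(R(\bar s))=\true$;
\item $f^{M_v}([\bar s])=[f(\bar s)]$.
\end{itemize}
Congruence makes these well defined. An induction on terms then shows that each closed term $t$ denotes $[t]$ in $M_v$. Next, induction on quantifier free sentences shows that $M_v\models\psi$ iff $v(\psi)=\true$. The atomic case is the definition. Negation and the binary connectives go through because $v$ respects the soundness of the propositional rules together with consistency. For instance, $v(\neg\psi)=\true$ iff $v(\psi)\neq\true$, since both true is inconsistent, and both false would violate excluded middle or the negation rules. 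This proves the first claim.

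For the second claim, let $v$ be a permissible valuation on a $\tau$-structure $M$, meaning its sentences are in $\tau_M$, every element is named, and $v$ agrees with $M$ on atomic facts in the sense stated before the lemma. The induction just given shows that the value of every quantifier free sentence is determined by $v$'s values on atomic sentences. Those values are fixed by $M$. So any two such valuations agree on all quantifier free sentences, namely on those true in $M$.

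The main obstacle is the connective step of the induction. Definition~\ref{trv} says only that $v$ "preserves soundness" of the propositional rules, and that does not literally force bivalent truth tables. Carnap's nonstandard valuations are exactly the counterexample: they make every sentence true and still respect the rules. What excludes them here is the consistency clause. I would argue explicitly that consistency, plus the requirement that $v$ be total on quantifier free sentences, rules out both $\psi$ and $\neg\psi$ being true. Then the $\vee$-elimination and negation rules force the standard clauses for $\neg$ and $\vee$.
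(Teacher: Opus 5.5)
Your construction is the same as the paper's, which builds the structure from the atomic values ($\abar\in R^M$ iff $v(R(\abar))=\true$). It is actually more careful, because you quotient closed terms by $v(s=t)=\true$ instead of tacitly identifying constants with elements. The problem is the connective step, on which both halves of the lemma depend.

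\textbf{The gap.} Consistency only excludes $v(\psi)=v(\neg\psi)=\true$. Nothing you cite excludes $v(\psi)$ and $v(\neg\psi)$ both being false. Totality does not, and neither do the single-conclusion rules for $\vee$E, $\neg$I/E, reductio, or excluded middle. Excluded middle only yields $v(\psi\vee\neg\psi)=\true$; getting a true disjunct out of that needs a multiple-conclusion rule.

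\textbf{A counterexample.} Take the provability valuation from Example~\ref{conj}, restricted to quantifier-free sentences, with a unary $R$ and a constant $c$ in $\tau$.
\begin{itemize}
\item It is total.
\item Its true sentences, the logical theorems, form a consistent set.
\item It preserves every single-conclusion propositional and identity rule, read as deductive models as the paper stipulates.
\item It makes $R(c)$ and $\neg R(c)$ both false and $R(c)\vee\neg R(c)$ true.
\end{itemize}
Its structure $M_v$ has $R^{M_v}=\emptyset$, so $M_v\models\neg R(c)$ while $v(\neg R(c))\neq\true$. Your induction ``$M_v\models\psi$ iff $v(\psi)=\true$'' therefore fails at negation. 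Moreover, $v$ and the truth valuation of $M_v$ agree on all atomic sentences but disagree on $\neg R(c)$. So under your reading of ``permissible,'' the second claim of the lemma would be false, not just unproved.

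\textbf{The fix.} Use what the paper builds in: Notation~\ref{langdef}.2 evaluates quantifier-free sentences by the classical truth tables, and Assumption~\ref{propass} fixes a categorical multiple-conclusion propositional system.
\begin{itemize}
\item Soundness under $v$ of multiple-conclusion rules such as $\vdash\psi,\neg\psi$ and $\psi\vee\chi\vdash\psi,\chi$ gives ``at least one is true.''
\item Combined with consistency, this yields the classical clauses for $\neg$ and $\vee$.
\end{itemize}
With that justification in place of the consistency argument, your induction goes through and both claims follow.

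\textbf{A smaller point.} In the transitivity step, drop ``otherwise \ldots\ $s\neq r$ would be inconsistent.'' It assumes that $v(s=r)\neq\true$ implies $v(s\neq r)=\true$, which is the bivalence at issue. The direct appeal to the soundness of the substitution rule for identity already gives $v(s=r)=\true$.
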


Note that the extension from `same values on quantifier free sentences' to `on all sentences' is not automatic. It will follow for our inferential logics only after we prove the quantifiers are
inferentially determined.  For any vocabulary $\tau$ 
 in either the one or two-sorted case, the collection for $\tau$-structures permissible for either of our logics (and the collection of sentences) is exactly the same as in first order. However, there are fewer admissible (Notation~\ref{langdef}.3.a) than permissible valuations; this restriction fuels the categoricity argument.


Definition~\ref{langdef}.2  is  the normal model theoretic
definition of truth, (\cite{Markerbook, Shoenfield, ButtonWalshbook}; we refer to  it as $\bf R$  (Robinson) semantics.\footnote{\cite{ButtonWalshbook} distinguish between the Tarski (\cite{TarskiVaught, ChangKeisler}  and Robinson (\cite{Shoenfield, Markerbook, Sacks}) semantics depending on whether the domain of an assignment is variables or constants, but note that they yield the same truth values in each model for sentences \cite[1.7,1.8]{ButtonWalshbook}. \cite{ButtonWalshbook} label as a `hybrid approach' a Robinson approach in which one is careful to avoid identifying the names with the elements of the models that they name. Moreover, they suggest that there is an induction over languages and models, since they introduce the constants as needed. For us, the truth of sentences on $M$ and $N$ is defined independently on each structure as we emphasize by the notation $\tau_M$  in our definition of truth in a structure. We do insist on their requirement that the names of a elements of a model $M$ cannot be elements of $M$. Since this is only a minor modification of Robinson,
 we omit the word hybrid and write Robinsonian semantics (including R-semantics, R-valuation).} However, we represent the satisfaction relation
on a structure $M$ by a valuation $v_M$ as in Definition~\ref{trv}

 \begin{notation}\label{langdef} {\rm Logics: 
 
 A logic $\Lscr$ consists of the lexical items defined below and 
 two maps delineating the true (in a model) sentences (2) and the provable ones (3).
 
 \begin{enumerate}
 \item Syntax:
 \begin{enumerate}
 \item In this paper,  a logic $\Lscr$ specifies logical operators including   propositional connectives $\&$, $\vee$, $\sim$ and the first order quantifiers\footnote{Of course,  \cite{modthelog} deals with many generalized quantifiers; but here all quantifiers are first order since we consider only $L_{\omega,\omega}$ and $L_{\omega_1,\omega}$, which allows countable
conjunctions and disjunctions, while modifying the inference rules for $L_{\omega,\omega}$. While in \cite{modthelog},  the logics considered need not have rules of inference, finding   categorical such rules is one of our goals.} ($\forall$, $\exists$) with a list of symbols for variables.

\item For $\tau$ as in Definition~\ref{str},  $\Lscr(\tau)$-formulas are defined by induction as in \cite[\S 1.2]{ButtonWalshbook}.

\item An $\Lscr(\tau)$ theory is a consistent set of 
$\tau$-sentences. A {\em pseudo-elementary} class\footnote{The terminology dates to Tarski in the early 1950's.  \cite[Chapter VII]{Shelahbook}} of $\tau$ structures
is the collection of $\tau$ reducts of a theory in larger vocabulary $\tau'$.

\end{enumerate}

\item First order semantics:  

Fix a function $f_M$    from the constants in an expanded vocabulary $\tau_M$ that adds a name for each element of $M$, such that $M \models R(\abar)$ if and only if
    $f_M(\abar) \in R^M$. This gives a truth value to each atomic formula ( $v_M(R(\abar)) = T$)  and thus
   to each quantifier free $\tau_M$-formula (using classical truth table rules for the propositional connectives).
We write $M\models R(\abar)$ if and only if $v_M(R(\abar)) = T$. 
   
    This collection of sentences is called the
   {\em quantifier-free diagram} of $M$, $\Delta_0(M)$.  Thus $v_M(R(\abar)) = T$ is a permissible valuation on $\Delta_0(M)$.

This definition\footnote{The definition of truth here corresponds to Robinson-Tarski-Geach semantics (see \cite[Sec. 4]{Lavine}. }
is extended to define truth in $M$ for arbitrary $\tau^M$-formulas by induction on quantifiers as in \cite[p.17]{ButtonWalshbook}.  The resulting $\Delta(M)$ is called the {\em complete diagram}.  In particular, for a  sequence
of constants $\abar$ from $\tau_M$ and a formula of quantifier rank $n$, $M \models (\forall x)\phi(x,\abar)$ if and only $M\models \phi(c,\abar)$ for each $c\in \Const(\tau_M)$ . Thus, by induction, we have for all formulas, $M \models \phi$ and $v_M(\phi) = T$ are equivalent.

\item  Inferential logic: An inferential logic contains a
set ${\bf  R}$ of rules of inference governing the logical operations (propositional connectives and quantifiers). 
\begin{enumerate}

 \item A {\em permissible} valuation $v$ (Definition ~\ref{trv}) over a vocabulary $\tau$ is {\em admissible} if and only if it preserves the soundness of the relation of logical derivability defined by the set ${\bf  R}$,  i.e. for every rule ${\bf r} \in \bf    R$, if $v$  assigns truth to the premise of an instance of ${\bf r}$, it also assigns truth to the conclusion.  In particular, it assigns truth to each logical theorem in $\Sigma$.

  We consider  natural deduction introduction and elimination rules of inference for each propositional connective and each quantifier. These are mostly standard so we specify explicitly only those that are central to our argument\footnote{See \cite{Gentzen}. Elegant systems of natural deduction for the propositional connectives and for the first-order quantifiers are given, for instance, in philosophical logic textbooks as \cite [Ch. 4, Ch. 6]{Forbes} and \cite[Sec. 20-3, 31-3]{Smith}.  In contrast, mathematical
logic texts generally rely on Hilbert style rules of inference and an unconscious `Turing thesis' for proofs --all acceptable proof systems give the same validities.}. We specify below the two sets of rules for first order logic that are relevant to our argument. We take the sub-derivations from assumptions, present in the rules of inference that discharge assumptions, to embody \textit{a formally logical derivability relation}\footnote{For cognoscenti:  and not just to be \textit{de facto truth preserving} (see \cite[pp. 399-400]{PelletierHazen}).}, i.e. we use what \cite{Garson} calls \textit{deductive models}.

All the logical calculi studied in this paper
agree on the semantics for the {\em quantifier free} formulas  as defined in 
Notation ~\ref{langdef}.2.
An admissible valuation $v_M$ for a set of inference rules ${\bf  R}$ will make $M$ a model of a collection of sentences $\Sigma$ formalized in the inferential logic defined by ${\bf   R}$. 

\item A set of logical inference rules is {\em categorical}\footnote{The term `categorical' is used in this paper in three related contexts. As above, as a property of an inference system, the usual philosophical use asserting that a theory (in some logic) has exactly one model, and the model theoretic notion of `$\kappa$-categoricity' that a theory has exactly one model of cardinality $\kappa$.} if and only if, for all countable vocabularies $\tau$ and all sets of sentences $\Sigma$, all admissible valuations on $\Sigma$ for those rules provide the propositional connectives, universal, and existential quantifiers with the same truth conditions\footnote{These truth conditions are usually spelled out in most of the  {\em philosophical} logic textbooks (see e.g., \cite[Ch. 5]{Halbach}, \cite[Ch.6]{Forbes}, \cite[pp. 213-14]{Garson}) in terms of the classical truth tables for the propositional connectives and in terms of `objectual' (Tarski) or `substitutional' (Carnap/Robinson) interpretations for the first-order quantifiers. In this paper we use the ideas and notations of the R-semantics (Definition~\ref{langdef}.2) for expressing the truth-conditions for quantifiers.}. For a categorical system the soundness of the rules   constrain each admissible valuation so that, any two admissible valuations that agree on the assignment of truth values to atomic sentences from $\Sigma$ agree on each first order sentence in $\Sigma$.\footnote{Any permissible valuation will give the same $\Delta_0(M)$.
But, various admissible valuations
(Example~\ref{non-categoricityFOL}) for different sets of rules
may give different truth values than the, semantically defined,
$\Delta(M)$. Since the semantic method will apply to any admissible valuation in Sections~\ref{subquant} and \ref{infinitary}, the valuation will agree with the semantic definition. Due to the downward L\"{o}wenheim Skolem this continues to hold  when our rules (Section~\ref{subquant}) force all models to be countable and even when (Section~\ref{infinitary}) uncountable models are allowed. Thus we can recast categoricity as: If a set of rules is categorical, then for any $\tau$   each permissible valuation $v$ corresponds to a structure $M =M_v$ with quantifier free diagram $\Delta_0(M)$ and if $v, v'$ are admissible  with
$\Delta_0(M_v) = \Delta_0(M_v')$ then $\Delta(M_v) = \Delta(M_v')$.}

\end{enumerate}

\end{enumerate}
}
\end{notation}

For the benefit of model theorists, we provide a background on the notion of `categoricity of inference rules'. This categoricity is a central step (Propositions~\ref{infRmeaning} and \ref{G-infRmeaning})  in the main arguments.

\begin{definition}[Varieties of Constants] \label{varcon}
We consider several varieties of constants: {\em $\tau$-constants, auxiliary constants, constant terms}.
A $\tau$-constant is a constant symbol in the vocabulary $\tau$; the $R$-semantics introduces {\em auxiliary constants}  in the
vocabularies $\tau_M$. Either case may also give rise to {\em constant terms}; these are definable unary functions of definable singletons, either a $\tau /\tau_M$ - constant  or  the solution of a formula with no parameters
that has a unique solution.  E.g. the formula $\neg \exists y S(y) =x$ in the theory of an $\omega$-sequence
in the vocabulary $\langle S,=\rangle$. Depending on context `constant' may refer to any of these.
\end{definition}

\begin{example}{(Categoricity of $\& I$ and $\&E$-rules)}\label{conj}{\rm

The introduction and elimination rules of inference for $\&$ enforce that the logical connective `$\&$' behaves according to the `classical' truth table definition. In any admissible valuation for these rules, the conjunction ``$\phi \& \psi$” is true if and only if both ``$\phi$” and ``$\psi$” are true. If ``$\phi \& \psi$” is true, then both ``$\phi$” and ``$\psi$” are true (by $\&E$), and if both ``$\phi$” and ``$\psi$” are true then ``$\phi \& \psi$” is true (by $\&I$). In the sense  defined in Notation~\ref{langdef}.3, these rules are categorical.
However, most of the inference rules for the other propositional connectives are non-categorical. In particular, the trivial valuation that assigns truth to all sentences will make $\phi$ and $\sim\phi$ both true and the provability valuation that assigns truth to each propositional theorem and falsehood to each non-theorem will make $\phi$ and $\sim\phi$ both false and their disjunction true (see \cite{BrincusS}).}
\end{example}

It is well known that a categorical formalization of classical propositional logic depends both on the format of the rules and on how their expressive power is defined. In particular, \cite{CarnapFor}, \cite{ShoSmiley}, and \cite{Garson} agree that a multiple-conclusion\footnote{The conclusion of a multiple conclusion rule is a set of formulas; any of them may be selected.} formalization (originated by Carnap; see \cite[p.348]{Brincus2}) uniquely determines the classical truth-conditions of the propositional connectives while the `axiomatic' (i.e. Hilbert style) proof systems fail for this task. 

\begin{assumption}
\label{propass} We fix a categorical multiple-conclusion  system for propositional logic.
\end{assumption}

The problem of a categorical formalization of first-order logic (FOL) is far more complex.\footnote{See \cite{Garson}, \cite{Brincus} and \cite{Speitel} for details on this problem and on the inferentialist program.} The natural deduction introduction ($\forall I$, $\exists I$)  and elimination rules ($\forall E$, $\exists E$) for the first-order quantifiers are the following:

\begin{definition}\label{standrules}
Let $\tau$ be a first-order vocabulary, $\phi(t)$ be any sentence that contains the individual constant $t$, and $x$ a variable in $\Lscr$.

$$\forall I:  \frac{\overset{\vdots}\phi (t)}{(\forall x)\phi(x)};
\hspace{.1in} \forall E:       \frac{(\forall x) \phi(x)}
        {\phi (t)}.$$
Restrictions on $\forall$I: $t$ does not occur in any premise or assumption on which $\phi(t)$ depends, and $x$ does not occur in $\phi(t)$. $x$ replaces all and only occurrences of $t$ in $\phi(t)$.

$$\exists I: \frac{  \phi (t)
}{(\exists x)\phi (x)};
\hspace{.1in} \exists E: \frac{(\exists x) \phi(x) \hspace{.1in} \overset{[\phi(t)]}{\overset{\vdots}{\psi}}}{\psi}.$$

Restrictions on $\exists$E: $t$ does not occur in i) $(\exists x) \phi(x)$; ii) $\psi$,  and iii) any premise or assumption on which the upper $\psi$ depends (excepting the assumption $\phi(t)$). The square brackets indicate that $\phi(t)$ is an assumption that the $\exists$E-rule discharges. $\phi(t)$ is obtained from $\phi(x)$ by replacing all and only occurrences of $x$ with $t$. 

\end{definition}

Both Carnap (\cite[pp.231-32]{CarnapLog},\cite[p.140, 148-50]{CarnapFor}),  and Garson (\cite[p.216, Th 14.3]{Garson}) showed that, due to the finitary character of the rules of inference, there are admissible valuations for the standard formalizations of first-order logic in which the first-order quantifiers have non-standard truth-conditions. 

\begin{example}[Non-categoricity of $\forall I$ and $\forall E$-rules]\label{non-categoricityFOL}

{\rm
(Carnap) Consider a vocabulary that has only two unary predicates $F$ and $G$ and a countable number of individual constants. 
\begin{enumerate}
\item Let $v$ be a standard ($\bf R$-semantics) truth-theoretic valuation. In particular, it interprets $(\forall x)F(x)$ such that $v((\forall x)F(x))= T$ and  $v(F(c)) = T$ for each constant $c$; the values of  $v(G(c))$ are irrelevant, for any constant $c$.
\item Define a valuation $v'$ that maps the individual constants in the vocabulary onto the objects from a denumerable domain and agrees with $v$ on the quantifier-free formulas. Interpret $(\forall x)F(x)$ in $v'$
as  \textit{`$v(F(c)) = T$ for any constant $c$ and $v'(G(c)) = T$, for a particular $c$, namely $b$'}. In $v'$ the universal quantifier has a richer content than that given by the usual truth conditions in $v$. We consider two options for how $v$ (and thus $v'$) interprets $Gb$, and thus how $v'$ interprets $(\forall x)F(x)$, relative to the corresponding structure. 
\begin{enumerate}
\item If $v(G(b))= v'(G(b))$ is true, then the $\forall E$ and $\forall I$ rules preserve their soundness under $v'$ since $Gb$ does not provide a counterexample. 
\item If $v(G(b))$ is false, then $\forall E$ is sound in $v'$ since its premise, i.e. $v'((\forall x)F(x))$, is false. Likewise, if $v(G(b))$ is false, then the $\forall I$-rule preserves its soundness since both $F$ and $G$ are atomic predicates and there is no generalizable or free variable proof for $Fc$. 
\end{enumerate}
Since $v'$ preserves the soundness of the $\forall I$ and $\forall E$-rules, $v'$ is admissible, although it interprets the universal quantifier non-standardly.

\end{enumerate}
}
\end{example}

\begin{remark} {\rm
 Carnap's non-standard valuation $v'$ is formulated in substitutional terms for a fixed countable vocabulary $\tau$ that involves no extension $\tau_M$. This valuation  offers an alternative to Notation~\ref{langdef}.2 for assigning truth-conditions to a universally quantified sentence. The rules of inference for the universal quantifier preserve their soundness in all structures of the kind mentioned above when $(\forall x)F(x)$ in $v'$ is interpreted as  \textit{`$v(F(c)) = \true$ for any constant $c$ and $v'(G(c)) = \true$, for a particular $c$, namely $b$'}. The valuation $v'$ is given by different semantic rules for the universal quantifier in the same structure.} 
\end{remark}

\begin{remark}[$\omega$-logic, $\omega$-rule, $\omega$-model and standard model] \label{omega-logics and rules}
{\rm
These terms appear with various definitions in various areas of logic. We attempt to differentiate. In all cases we let $c$ range over a countable set $C$ of constants. 
\begin{enumerate}
\item $\omega$-logic and $\omega$-rule. 
There are both  model-theoretic   and   proof-theoretic approaches to $\omega$-logic.\footnote{While this notion developed from the study of arithmetic, this generalization has no reliance on arithmetic.}
\begin{enumerate}
\item  
The more traditional proof-theoretic one  adds the `classical' $\omega$-rule to the rules of inference of {\em one-sorted} first-order logic:

$$\omega\text{-rule}:  \frac{\{\phi (c): c \in C\}}{(\forall x)\phi (x)}$$\label{classwrule}

 In the classical $\omega$-rule, $\phi(x)$ stands for any well-formed formula of the  vocabulary $\tau$ in which the rule is formulated. Likewise, the variable `x' is the only free variable in $\phi$. 
 The $c$ are individual constants\footnote{Instead of using individual constants $c$, the $\omega$-rule is sometimes formulated by using the so-called standard numerals. Since our interest in this paper goes beyond PA, we shall use the formulation of the $\omega$-rule with individual constants $c$} from  $\tau$.
 
 This version of the rule was also used by Carnap \cite[p.38, DC-2]{CarnapLog}, \cite[p.140, D30-3]{CarnapFor} and by Rosser \cite[p.129]{Rosser}, who calls it `Carnap's Rule' -for historical remarks on different uses of the $\omega$-rule see \cite[pp. 101-5]{Isaacson}. Precise inferential formulation of this rule will be given in Definition ~\ref{ourinfrule}.

 \item The model-theoretic version, stemming from (\cite{Henkinomt, Orey, MorleyLeeds} as treated in  \cite[pp.28, 39]{Ebbing}, and \cite[pp. 153-155] {Shapiro}) is {\em two-sorted} with a designated predicate containing
 a fixed countable set of constants. In this case, an $\omega$-model $M$ is one where the predicate $N(M)$ consists only of a designated set of constants.  Thus,  the notion of $\omega$-model is the same as in set theory
 \cite[p 145]{Kunen}, where $N(x)$ is replaced by $x\in \omega$ (So, not 2-sorted but with a distinguished predicate.).

  And the $\omega$-rule is:
  $$\text{Generalized}\hspace{.1in} \omega\text{-rule}:  \frac{\{\phi (c): c \in C\}}{(\forall x )(N(x) \rightarrow\phi(x))}.$$ 
  Chang and Keisler (\cite[p.82-3]{ChangKeisler}) expound both variants. We modify this rule to the $G-\omega$-rule in  Definition~\ref{ourinfrule2}.
  We use the single-sorted approach in Section~\ref{subquant} and the two-sorted in Section~\ref{infinitary} but with a heavy emphasis on the rules of inference in both cases.
  \end{enumerate}

\item $\omega$-model: There are several meanings of this term; the most important distinction  here is whether the context is $1$-sorted or $2$-sorted.
\begin{enumerate}[(a)]
\item $1$-sorted:
\begin{enumerate}[(i)]
 \item {\em $\omega$-sequence  $\omega$-model}
 In the vocabulary $(0,S)$ the domain of the $\omega$-model
 is the set of iterations of applications of a $1-1$ function to $0$.

    \item {\em arithmetic $\omega$-model}
The concept arose in studying arithmetic and the classical notion is: $M$ is an $\omega$-model of $T$ if $T$ interprets arithmetic in the vocabulary $(+,\times,0,1)$ of arithmetic and 
the universe of $M$ is named by the numerals.

\item {\em first order $\omega$-model}
This is generalized by replacing `natural numbers' with `the denotations of a specified countable set of constants' and $T$ is any first order theory. This is the meaning of $\omega$-model in Section~\ref{subquant}.


\end{enumerate}
\item  $2$-sorted: \cite{Orey} introduced the $2$-sorted approach with a predicate $N$, a vocabulary containing countably many constants from $N$
and an $\omega$-model is one where these constants exhaust $N$. This is the meaning of $\omega$-model in Section~\ref{infinitary}.

This notation is especially important in reverse mathematics.
The vocabulary contains two sorts, $N,S$ ($N$ for numbers and $S$ for sets of numbers) and symbols for addition, multiplication, order, $0,1$. An $\omega$ model is one
    in which the structure on the number sort is the standard structure for arithmetic, and the collection of sets is
    non-empty. \cite[p 3]{Simpson} \cite[Section 2.2]{Eastaughsep}.  That is,   second order arithmetic
    with Henkin models rather than full second order quantification is studied.

\end{enumerate}

     \item Standard model:  A {\em standard model} arises when there is an informal notion that has  widely accepted formal counterpart.  E.g.,  arithmetic,  consider a vocabulary $\tau$ that includes the following non-logical terms $\{0, S, +, \times\,, <\}$ with the following abbreviations: $1=S0, 2=SS0, \ldots$. A standard model of arithmetic is an $\omega$-model in which the domain is $N=\{0,1,2, 3,\ldots\}$, i.e. the model omits the set of formulas $\{x\neq 0, x\neq 1, x\neq 2, \ldots\}$ and the functions behave usual.
     
     As \cite[p. 28, 5B]{ButtonWalshbook} point out, a confusion may arise when, influenced by Dedekind's axioms and the omnipotence of second order definability, one takes only $(\omega, 0,S)$ as the standard model of arithmetic.

\end{enumerate}}

\end{remark}

It has been known at least since \cite{Rosser} that the models of $\PA^{\omega}$, i.e. the consequences of the first order Peano axioms  in  first order
logic with the $\omega$-rule, are exactly the models of $TA$ (true arithmetic: all sentences that are true in the standard model of $\PA$). Thus, $\PA^{\omega}$ is not categorical (\cite{Skolem34}).\footnote{See \cite{Shoenfieldomega}, \cite{Franzen}, and \cite{Ketland} for later treatments.}

\section{The Inferential $\omega$-rule and the Categoricity of $\PA^{\omega}$} \label{subquant}
\stepcounter{subsection} 

Notation~\ref{langdef}.2 fixes the  $R$-semantics  as a definition for valuations, interpretations, and structures.
We now describe two systems of inference rules: inferential (I) (here)
and $G$-inferential $(G)$ (\S 4). The admissible valuations for these rules assign truth-conditions for the quantifiers governed by these rules and we prove that the rules uniquely determine these truth-conditions (Propositions ~\ref{infRmeaning}, ~\ref{G-infRmeaning}).

In view of  Assumption~\ref{propass}, to study categoricity of inferential rules, we need consider only the quantifier rules but with auxiliary constants. So  with Assumption~\ref{propass} we pass to the  first order case and even allow
expansions by constants of the original vocabulary.  

 Carnap \cite[p.145]{CarnapFor} introduced a version of the $\omega$-rule (called here the substitutional $\omega$-rule) to provide a categorical formalization of the truth-conditions for the universal quantifier defined over a denumerable domain. He fixes a countable set $\Const(\tau)$ of constants; 
  each valuation $v$ maps these constants {\em onto} a domain (and implicitly the $\tau$ relations and functions). He assumed the  countability of $\Const(\tau)$ and the `onto' condition to enforce the idea that all models are countable. With the Robinsonian semantics (Definition~\ref{langdef}.2)
 we consider valuations over extended vocabularies and thus uncountable models.

 {\em The Inferential $\omega$-rule  we now introduce is considerably stronger than the usual
versions. While the instantiations  that must be satisfied in the hypothesis of the $\omega$-rule are the same as usual, the formula $\phi$ now may have parameters $\dbar$ from $C- \Const(\tau)$. We require 
$\Const(\tau)$ to be countably infinite.}

Definition ~\ref{ourinfrule} below is a syntactic definition of the rules of inference for the quantifiers in the inferential $\omega$-logic. Proposition ~\ref{infRmeaning} shows that the rules of inference for the universal quantifier uniquely determine the truth-conditions for $\forall$,  making the rules in Definition ~\ref{ourinfrule} categorical. While we restrict to vocabularies with countably many constants,  unlike Carnap, we do not require all constants from $C$ to be used in every valuation, but only those from $Const(\tau)$.  Thus, our rules are actually rule schema, indexed by specifying a set of constants.

 We restrict ourselves to vocabularies with countably many constants.  Unlike Carnap, we do not require all constants from $C$ to be used in every valuation. We only require the constants from $Const(\tau)$ to be used in every valuation. Thus, our rules are actually rule schema, indexed by specifying a set of constants.

We state the rule for vocabularies with infinitely many constants.  We can study theories $T$ in vocabularies  with
only finitely many constants such that all models of $T$ have
the definable closure of the $\emptyset$ is infinite. E.g.
$\th(\omega,S,0)$, an $\omega$-sequence.

\begin{definition}{(Inferential $\omega$-rule)}\label{ourinfrule} Fix a vocabulary $\tau$ and use the notation of $R$-semantics as in Notation ~\ref{langdef}.2. For  any countable set of constants $C\supseteq \Const(\tau)$ and any  well-formed formula $\phi(x,\dbar)$ in the vocabulary  $\tau$ expanded by the constants from $C- \Const(\tau)$:

\[I-\omega\text{-rule}: \frac{\bigwedge\{\phi(c ,\dbar): c \in \Const(\tau)\}} {(\forall x)\phi(x,\dbar)};\]

$$I-\forall E:       \frac{(\forall x) \phi(x,\dbar)}
        {\phi (c, \dbar), \text{ for each } c\in C}.$$
 
 The rules for the existential quantifier in the notation of the  R-valuations are the following:

$$I-\exists I: \frac{   \phi (c , \dbar), \text{ for some c $\in$ $C$}}{(\exists x)\phi(x,\dbar)};$$

$$I-\exists E:       \frac{(\exists x) \phi(x,\dbar)}
        {\bigvee\{\phi(c ,\dbar): c \in  C \}}.$$

\end{definition}

\begin{proposition}{(Categoricity of I-$\omega$-rule and $I-\forall E$-rule)}\label{infRmeaning}
The  I-$\omega$-rule and the I-$\forall E$-rule for the universal quantifier in the inferential $\omega$-logic uniquely determine: 

 For any countable set of constants $C\supseteq \Const(\tau)$, any admissible valuation $v_C$, $v_C((\forall x)\phi(x, \dbar))=\true$ iff 
$v_C(\phi(c ))=\true$ for each constant $c\in \Const(\tau)$.

\end{proposition}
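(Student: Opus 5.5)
The proof is a two-directional soundness argument. It uses only that $v_C$ is admissible: it preserves the soundness of the $I-\omega$-rule and the $I-\forall E$-rule of Definition~\ref{ourinfrule}. It also uses Assumption~\ref{propass}, which lets us treat the infinitary conjunction in the hypothesis of the $I-\omega$-rule classically: $v_C(\bigwedge\{\phi(c,\dbar): c\in\Const(\tau)\})=\true$ iff $v_C(\phi(c,\dbar))=\true$ for every $c\in\Const(\tau)$. This is the analogue, for countable conjunctions, of the categoricity of $\& I$ and $\& E$ in Example~\ref{conj}. I would state this explicitly at the start, since it is the one place where something beyond the quantifier rules is needed. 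If necessary, I would justify it by the same argument as Example~\ref{conj}: treat the premise set $\{\phi(c,\dbar): c\in\Const(\tau)\}$ of the rule as a set of premises rather than a single formula.

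For the direction from right to left, fix $C\supseteq\Const(\tau)$, parameters $\dbar$ from $C-\Const(\tau)$, and an admissible $v_C$. Suppose $v_C(\phi(c,\dbar))=\true$ for every $c\in\Const(\tau)$. Then $v_C$ assigns truth to the premise of an instance of the $I-\omega$-rule. By admissibility it assigns truth to the conclusion, so $v_C((\forall x)\phi(x,\dbar))=\true$.

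For the direction from left to right, suppose $v_C((\forall x)\phi(x,\dbar))=\true$. The $I-\forall E$-rule has as conclusions $\phi(c,\dbar)$ for each $c\in C$. Soundness under $v_C$ gives $v_C(\phi(c,\dbar))=\true$ for all $c\in C$, and in particular for all $c\in\Const(\tau)\subseteq C$. The two directions together give the biconditional, with the truth condition depending only on the instances at $\Const(\tau)$.

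The main point to stress, and the only real obstacle, is the contrast with Example~\ref{non-categoricityFOL}. There, a nonstandard $v'$ could make $(\forall x)F(x)$ false even though all instances are true, because the finitary $\forall I$ rule only fires on generic derivations. Here the $I-\omega$-rule takes the full set of instances as its premise, so no such gap remains. Allowing parameters $\dbar$ from $C-\Const(\tau)$ ensures the argument covers every formula whose truth value is at stake, not just sentences over $\tau$. I would also remark that the statement's ``$\phi(c)$'' should be read as $\phi(c,\dbar)$. Finally, since the argument is uniform in $C$, any two admissible valuations agreeing on the relevant instances agree on $(\forall x)\phi(x,\dbar)$. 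That is exactly categoricity in the sense of Notation~\ref{langdef}.3b.
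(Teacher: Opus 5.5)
Your proof is correct and matches the paper's argument: the $I$-$\forall E$-rule gives left-to-right (all instances over $C$ are true, so in particular those over $\Const(\tau)$), and soundness of the $I$-$\omega$-rule gives right-to-left. Your remark that the infinitary premise $\bigwedge\{\phi(c,\dbar): c\in\Const(\tau)\}$ should be read as a set of premises is a useful clarification that the paper leaves implicit, and it is cleaner than appealing to the propositional assumption, which covers only finitary connectives.
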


\begin{proof}
$(\Longrightarrow)$ For any admissible valuation $v_C$, if $v_C((\forall x)\phi(x, \dbar))=\true$, then $v_C(\phi(c ,\dbar))$ is true, for each $c $ from $C$ (by the I-$\forall$E-rule) and, in particular, for each $\tau$-constant $c$. 

$(\Longleftarrow)$ For any admissible valuation $v_C$, if $\phi(x,\dbar)$ is a formula in the vocabulary $\tau$ expanded by the constants $C$ and $v_C(\phi(c ))=\true$ for each $\tau$-constant $c$, then $v_C((\forall x)\phi(x, \dbar))=\true$ (by the I-$\omega$-rule).

\end{proof}



  We now show that with their reading in terms of  R-valuations the rules of inference for the 
 universal quantifier in the inferential $\omega$-logic make $\PA^{\omega}$ categorical. 
 
A fundamental result for the $\omega$-logic is the $\omega$-completeness theorem (see \cite[p.82]{ChangKeisler}), where $\omega$-model is in the sense of Remark~\ref{omega-logics and rules}.2.a.ii):

\begin{proposition}\label{omegamodexistCK}
A theory $T$ in the vocabulary $\tau$ is consistent in $\omega$-logic if and only if $T$ has an $\omega$-model. 
\end{proposition}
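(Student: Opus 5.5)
Both directions reduce to standard facts. The easy direction is soundness; the hard direction is a Henkin-style construction that respects the $\omega$-rule (an $\omega$-completeness theorem). Here ``$\omega$-logic'' means first-order logic together with the $\omega$-rule over the countable set $\Const(\tau)$, and an $\omega$-model is one in which every element is the denotation of some $c\in\Const(\tau)$.

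\emph{Soundness.} Suppose $M$ is an $\omega$-model of $T$. I will show by induction on the (possibly infinite, well-founded) length of derivations that every sentence derivable from $T$ in $\omega$-logic is true in $M$. The finitary first-order rules are sound in every structure. For the $\omega$-rule, assume $M\models\phi(c)$ for every $c\in\Const(\tau)$. Since every element of $M$ is named by some such $c$, the $R$-semantics of Notation~\ref{langdef}.2 gives $M\models(\forall x)\phi(x)$. Hence $T$ proves no contradiction, so $T$ is $\omega$-consistent.

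\emph{Completeness.} Assume $T$ is $\omega$-consistent, and enumerate all $\tau$-sentences as $\sigma_0,\sigma_1,\ldots$ (there are countably many since $\tau$ is countable). I will build an increasing chain $T=T_0\subseteq T_1\subseteq\cdots$ of $\omega$-consistent theories, each obtained from $T$ by adding finitely many sentences. At stage $n$, add $\sigma_n$ if the result is $\omega$-consistent and otherwise add $\neg\sigma_n$. One of the two works: if both $T_n\cup\{\sigma_n\}$ and $T_n\cup\{\neg\sigma_n\}$ prove a contradiction, then by $\vee$-elimination (proof by cases) $T_n$ does as well.

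The key extra step is the witnessing condition. If we added $\neg(\forall x)\phi(x)$, we must also add $\neg\phi(c)$ for some $c\in\Const(\tau)$. Such a $c$ exists: if $T_n\cup\{\neg\phi(c)\}$ were $\omega$-inconsistent for every $c$, then $T_n\vdash\phi(c)$ for all $c$. The $\omega$-rule would then give $T_n\vdash(\forall x)\phi(x)$, contradicting the $\omega$-consistency of $T_n\cup\{\neg(\forall x)\phi(x)\}$. Existential sentences are handled the same way through $\exists x\,\phi\leftrightarrow\neg\forall x\neg\phi$.

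Let $T^*=\bigcup_n T_n$. Then $T^*$ is complete, and it is $\omega$-consistent because each derivation of a contradiction would have to be carried out inside some stage. (If derivations use infinitely many premises, argue instead that each finite stage is $\omega$-consistent and that $T^*$ is deductively closed under the $\omega$-rule by construction.) $T^*$ is also Henkin with witnesses in $\Const(\tau)$. Build the term model on $\Const(\tau)$ modulo the relation $c\sim d \iff (c=d)\in T^*$, interpreting relations and functions by $T^*$. Closed terms equal constants because $(\exists x)(t=x)\in T^*$ has a witness. A routine induction on formulas, using the witness property for $\forall$, shows $M\models\sigma\iff\sigma\in T^*$. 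The result $M$ is a model of $T$, and every element is the denotation of a $\tau$-constant, so $M$ is an $\omega$-model.

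\emph{Expected obstacle.} The main difficulty is making precise what ``$\omega$-consistent'' means when derivations are infinitary, so that the proof-by-cases step and the union step are valid. I would define $\omega$-provability as the least set closed under the finitary rules and the $\omega$-rule. I would then check that this consequence operator is monotone and supports the deduction theorem, namely $T\cup\{\sigma\}\vdash\bot$ implies $T\vdash\neg\sigma$. This is verified by induction on derivations, and the $\omega$-rule case commutes with adding an implication antecedent. With the deduction theorem in hand, the argument at each stage needs only that single finite-extension fact. Alternatively, the whole statement can be cited as the $\omega$-completeness theorem of \cite[p.82]{ChangKeisler}.
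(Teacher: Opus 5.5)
The paper gives no argument of its own here. It imports the $\omega$-completeness theorem from \cite[p.82]{ChangKeisler}, where (in the form with a predicate $N$) the hard direction comes from the omitting types theorem. The $\omega$-closure of $T$ locally omits the type $\{x\neq c: c\in\Const(\tau)\}$: a formula $\theta(x)$ consistent with it that implied every $x\neq c$ would give $\neg\theta(c)$ for all $c$, hence $(\forall x)\neg\theta(x)$ by the $\omega$-rule. A model omitting that type is exactly an $\omega$-model. Your direct Henkin construction is correct and is essentially that argument unwound. The $\omega$-rule enters at the same point, to ensure the witness for $\neg(\forall x)\phi(x)$ can be chosen among the old constants, so no fresh constants are added and the term model lives on $\Const(\tau)$. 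The citation route is shorter and matches Section~\ref{infinitary}, where omitting countably many types is exactly what the $G$-$\omega$-rule is for. Your route is self-contained, is phrased directly in the one-sorted, constant-indexed setting of Section~\ref{subquant}, and shows transparently how the $\omega$-rule replaces Henkin constants.

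Two points need fixing. First, your claim that a derivation of a contradiction from $T^*$ is carried out inside some stage is false in $\omega$-logic, since one application of the $\omega$-rule can draw premises from infinitely many stages. You never need $\omega$-consistency of $T^*$, though. The truth lemma uses only three facts: $T^*$ is complete, each $T_n$ is first-order consistent (so $T^*$ is closed under first-order consequence), and $T^*$ has witnesses in $\Const(\tau)$. The $\omega$-consistency of $T^*$ then follows from soundness once the model is built.

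Second, \emph{the finitary rules are sound in every structure} is the right justification only in two cases: a Hilbert-style calculus that generalizes on variables (the invariant being truth in $M$ under all assignments), or natural deduction whose eigen-parameters lie outside $\Const(\tau)$. Suppose instead $\forall I$ may generalize on a constant of $\Const(\tau)$, as the constant-based rule of Definition~\ref{standrules} would allow if combined naively with the $\omega$-rule. Then soundness for $\omega$-models fails, because reinterpreting the eigen-constant can destroy the $\omega$-model property. For example, from $\Gamma=\{P(c_n):n\ge 2\}$ the $\omega$-rule yields $(\forall x)(x\neq c_1\to(P(x)\vee x=c_0))$. Since $c_1$ does not occur in $\Gamma$, $\forall I$ then yields $(\forall y)(\forall x)(x\neq y\to(P(x)\vee x=c_0))$. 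This sentence fails in the three-element $\omega$-model of $\Gamma$ in which $c_0$ and $c_1$ name distinct elements outside $P$ and every other $c_n$ names the single element of $P$. So fix the calculus explicitly before running your induction on derivations.
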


The $\omega$-completeness theorem  tells us that the presence of the $\omega$–rule guarantees the existence of an $\omega$-model, but it does not tell us that {\em all} models of $\PA^{\omega}$  are $\omega$-models (see \cite[Th.2]{Orey}, \cite[Th.3]{Henkinomt} ). Our argument for the categoricity of $\PA^{\omega}$ is formulated from an inferential point of view;  we take the soundness of the $I-\omega$–rule, spelled out in terms of Robinsonian valuations, as primitive. This allows us to read off the truth-conditions of the universal quantifier in the $I-\omega$-logic. With these truth-conditions we provide a positive answer to the question of the categoricity of $\PA^{\omega}$. Roughly, the argument goes as follows:\\

{\bf P1.} The truth-conditions of the universal quantifier are uniquely determined by the $I-\omega$-rule and $I-\forall E$-rule in the inferential $\omega$-logic (Proposition ~\ref{infRmeaning}).\\ 
\indent {\bf P2.} With these truth-conditions of the universal quantifier we obtain a categorical characterization of the natural numbers (Theorem ~\ref{wrulebworks}).\\
\indent {\bf C.} Thus, with the $I-\omega$-rule and $I-\forall E$-rule in the $I-\omega$-logic we obtain a categorical characterization of the natural numbers.\\

The first premise of this argument was established (Proposition ~\ref{infRmeaning}). We show now  the second premise. For clarity, we remind the reader that we work with countable  vocabularies $\tau$ and we assume that all objects from the domain that someone can `refer to' are named by a countable number of individual constants. We prove below  that $\PA^{\omega}$ is categorical. The proof we give below (in Theorem ~\ref{wrulebworks}) shows that a non-standard model for $\PA$ does not satisfy the rules of inference for the universal quantifier in the $I-\omega$-logic.

\begin{definition} \label{prime} 
 A  $\tau$-structure $M$ is an {\em algebraically  or Robinson prime} model of $T$ if it can 
 be embedded in every model of $T$,
\end{definition}

\begin{definition} [Robinson's {\bf Q}]  By $\Qbar$, we mean the finitely axiomatized first-order theory, considerably weaker than Peano arithmetic (PA), whose axioms contain only one existential quantifier. See \cite{TMR} or the splendid wikipedia article on Robinson Arithmetic. 
Like PA, it is incomplete and incompletable in the sense of Gödel's incompleteness theorems, and essentially undecidable. 
The vocabulary $\tau_{\Qbar}$ for $\Qbar$ has a single constant symbol $0$ and function symbols $S$ (unary) and $+, 
\times$ (binary). 
\end{definition}

\begin{notation}  For any first order theory, we denote the set of its consequences under an $\omega$-rule as $T^\omega$. There is a certain ambiguity in the $T^\omega$ notation as its
meaning depends  on which variant of $\omega$-logic is being considered. In this section we use $T^\omega$ as a first order theory closed under the $I-\omega\text{-rule}$ and in the next one as closed under the $G-\omega\text{-rule}$.
\end{notation}

\begin{theorem} \label{wrulebworks} 
{\bf With the inferential $\omega$-rule (Definition~\ref{ourinfrule}), the standard model of Robinson's $\Qbar$ has no proper extension satisfying $\Qbar^{\omega}$, thus
$\Qbar^\omega$ is categorical.}
\end{theorem}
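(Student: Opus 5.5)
The plan has two steps: first show that every model of $\Qbar^{\omega}$ contains a copy of the standard model, then use the $I-\omega$-rule to show that this copy exhausts the model. Categoricity follows at once.

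\emph{Step 1: the standard model embeds.} Here $\Const(\tau_{\Qbar})$ is understood as the countably infinite set of constant terms (Definition~\ref{varcon}), namely the numerals $\underline{n} = S^n 0$. Let $M$ be any model of $\Qbar^{\omega}$, given by an admissible valuation $v$ over $\tau_M$. We use the standard fact that $\Qbar$ proves, for each $n \neq m$, that $\underline{n}\neq\underline{m}$, and proves the correct equations $\underline{n}+\underline{m}=\underline{n+m}$ and $\underline{n}\times\underline{m}=\underline{nm}$. Hence $n\mapsto \underline{n}^M$ is an embedding of the standard model $\mathbb{N}$ into $M$, so $\mathbb{N}$ is algebraically prime (Definition~\ref{prime}). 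Thus it suffices to show that this embedding is onto.

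\emph{Step 2: the embedding is onto.} Suppose $M$ properly extends $\mathbb{N}$, and choose $d\in M$ not of the form $\underline{n}^M$. Let $\dbar$ be an auxiliary constant in $C-\Const(\tau)$ naming $d$; this is exactly where the $I-\omega$-rule needs parameters from outside $\Const(\tau)$. Take $\phi(x,\dbar)$ to be $x\neq \dbar$. For every $c=\underline{n}\in\Const(\tau)$, the quantifier-free sentence $\underline{n}\neq\dbar$ is true in $M$, and by Lemma~\ref{modval} every permissible valuation gives it the value $\true$. So the premise $\bigwedge\{c\neq\dbar : c\in\Const(\tau)\}$ of the $I-\omega$-rule is true under $v$. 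By admissibility (equivalently, by Proposition~\ref{infRmeaning}), $v((\forall x)\, x\neq\dbar)=\true$. Applying $I-\forall E$ with the instance $c=\dbar\in C$ then gives $v(\dbar\neq\dbar)=\true$. This contradicts the identity rules that every permissible valuation must preserve (Definition~\ref{trv}). Hence no such $d$ exists, $M\cong\mathbb{N}$, and any two models of $\Qbar^{\omega}$ are isomorphic. Note that the standard model itself satisfies $\Qbar^{\omega}$, since there the rules are sound under the R-semantics. Therefore $\Qbar^{\omega}$ is consistent and categorical.

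\emph{Main obstacle.} The delicate point is justifying the treatment of $\Const(\tau)$. The vocabulary of $\Qbar$ has only the single constant $0$, while the $I-\omega$-rule requires $\Const(\tau)$ to be countably infinite. I would handle this by appealing to the remark before Definition~\ref{ourinfrule}: the definable closure of $\emptyset$ is infinite, so the numerals serve as the constants. This needs care, because the identification $\underline{n}\mapsto n$ must hold in every model, which is precisely what Step 1 supplies. The step also needs uncountable models to be handled. Because the argument uses only one auxiliary constant, it works for models of any cardinality, and downward L\"owenheim--Skolem is not needed.
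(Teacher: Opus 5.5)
Your proof is correct and follows the paper's argument. You embed the standard model as the algebraically prime model, then apply the $I-\omega$-rule to $x\neq d$ for a nonstandard $d$ and instantiate at $d$ by $I-\forall E$ to get $d\neq d$. Two choices differ slightly from the paper, and both make the argument a little tighter. You take the numerals as $\Const(\tau)$, whereas the paper uses a countable set of constants mapped onto $N$ with $v_M$ chosen to extend $v_N$; with numerals, their interpretation in $M$ is forced. You also observe that $C=\Const(\tau)\cup\{d\}$ is already countable, so the paper's restriction to countable $M$, and hence any L\"owenheim--Skolem step, is unnecessary.
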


\begin{proof} Note that in any model of $\Qbar$, the $\tau_{\bf Q}$-substructure generated from $0$ is isomorphic to $\bf N =\langle N,0, +,\times\rangle$.
That is, $N$ is an algebraically prime model of $\Qbar$.  Thus,  an arbitrary countable non-standard model $M$ of $\Qbar$ extends (an isomorphic copy of) $N$. 
Fix $\Const(\tau_N)$ as a countable set of constants containing $0$ and a valuation   $v_N$ that maps $\Const(\tau_N)$ onto $N$.
Let $v_M$ be an extension of $v_N$ which enumerates the remaining elements of the $\tau_M$-structure $M$ 
by constants $\Const(\tau_M) \supsetneq 
\Const(\tau_N)$. 
Fix a particular $d \in  \Const(\tau_M)- 
\Const(\tau_N)$ and consider the formula $\phi(x,d): x \neq d$. 

If $v_M$ is admissible, then it satisfies the following instance of the inferential $\omega$-rule.

$$I-\omega\text{-rule}:  \frac{\{\phi (c,d):  c \in \Const(\tau_N)
\}}{(\forall x)\phi (x,d)};
\hspace{.1in}$$

Again, if $v_M$ is admissible, it also satisfies the following instance of the inferential $\forall$E-rule:

$$I-\forall E:       \frac{(\forall x)\phi(x,d)}
        {\phi (e,d), \text{ for each } e \in\Const(\tau_M)}.$$

By this instance of the I-$\forall E$, we have $\phi(d,d)$. But this is a contradiction since $d =d$.  Thus $v_M$ violates the rules for the universal quantifier in $\omega$-logic and is not admissible.

 Note that $v_N$ is clearly admissible since both the $I-\omega$-rule and the $I-\forall E$ rule will take constants $c$ only from $\Const(\tau_N)$ when applied in $v_N$. Thus, the $I-\omega$-rule contains as premises only instances that can be derived in standard first-order logic and no counterexample will occur.
\end{proof}

Of course, this also implies $PA^\omega$ is categorical, since it is a consistent extension of $\Qbar^{\omega}$.

\begin{remark}\label{expmain}{\rm
The proof of Theorem~\ref{wrulebworks} assumes, for reduction to contradiction, that both $v_N$ and $v_M$ are admissible valuations over $\Qbar^\omega$. The $I-\omega$-rule and the $I-\forall$E-rule are legitimately applicable in $v_M$ since the soundness of these rules was defined for arbitrary countable vocabularies $\tau$ and the formula $\phi(x): x \neq d$ is a syntactically well-formed formula in the vocabulary $\tau_M$ of the valuation $v_M$. However, since a contradiction is derived, then $v_M$ is not an admissible valuation. }     
\end{remark}

To give general conditions on a theory $T$ for categoricity with the $I-\omega$-rule requires abstracting two
elements of the proof of Theorem~\ref{wrulebworks}. We made explicit that the theory $\bf Q$
 has an algebraically prime
model.  It was also crucial that every element of that model was named. With this observation it is
straightforward to deduce:

\begin{corollary}\label{apcat} If a first order theory $T$ in a vocabulary $\tau$ has an algebraically prime model 
with every element named by a $\tau$-constant,
then $T$ plus the $I-\omega$-rule
is categorical. 

In particular this applies to the theory of $(\omega,0,S)$
with axioms that $S$ is $1-1$ and every point is in the range except $0$.
\end{corollary}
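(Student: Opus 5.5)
The plan is to abstract the proof of Theorem~\ref{wrulebworks}, keeping its two ingredients: an algebraically prime model $N$ of $T$ (Definition~\ref{prime}), and the fact that every element of $N$ is named by a $\tau$-constant.

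First I fix such an $N$ and let $M$ be an arbitrary countable model of $T$; by the downward L\"owenheim--Skolem theorem it suffices to treat countable $M$, as in the remark after Notation~\ref{langdef}. By algebraic primeness there is an embedding $N\hookrightarrow M$. Since every element of $N$ is the denotation of some $c\in\Const(\tau)$, and embeddings preserve constants, the image of this embedding is exactly the set of denotations of $\Const(\tau)$ in $M$.

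There are now two cases.

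\emph{Case 1: $M$ contains an element outside this image.} Extend the valuation by auxiliary constants $\Const(\tau_M)\supsetneq\Const(\tau)$ enumerating $M$, and pick $d$ naming an element not denoted by any $\tau$-constant. Consider $\phi(x,d)\colon x\neq d$.
\begin{itemize}
\item Each premise $c\neq d$ with $c\in\Const(\tau)$ is true in the quantifier-free diagram, so by Lemma~\ref{modval} any permissible valuation assigns it true.
\item Admissibility for the $I$-$\omega$-rule then forces $(\forall x)(x\neq d)$ to be true.
\item The $I$-$\forall E$ rule, instantiated at $d\in C$, then forces $d\neq d$ to be true, contradicting the identity rules required of a permissible valuation (Definition~\ref{trv}).
\end{itemize}
So no admissible valuation realizes such an $M$.

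\emph{Case 2: the embedding is onto.} Then $M\cong N$.

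Hence every model on which an admissible valuation exists is isomorphic to $N$. It remains to check that $N$ itself carries an admissible valuation, so that $T$ with the rule is consistent and has exactly one model. For $v_N$ with all constants taken from $\Const(\tau)$, the set $C-\Const(\tau)$ contributes nothing. By Proposition~\ref{infRmeaning}, the universal quantifier then gets exactly its $R$-semantics truth condition over a fully named domain, so $v_N=\Delta(N)$ is sound for all the rules. Since $N\models T$, this gives a model of $T$ plus the rule.

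For the particular case, I take the theory of $(\omega,0,S)$ with axioms that $S$ is $1$--$1$ and $0$ is the only element not in the range. The substructure generated by $0$ is $\{S^n0\}$, which is isomorphic to $\omega$ and embeds into every model, because injectivity and $S^n0\neq0$ make the iterates distinct. The closed terms $S^n0$ name every element.

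The one point needing care, and I expect it to be the main obstacle, is that the statement says ``named by a $\tau$-constant'' while here the names are constant terms (Definition~\ref{varcon}), not constants. I would handle this the way Theorem~\ref{wrulebworks} did for $\Qbar$: take $\Const(\tau_N)$ to be a countable set of constants interpreted as the $S^n0$, regarded as constant terms in the sense of Definition~\ref{varcon}. Case~1 then runs verbatim, because the images of those terms are preserved by any embedding.
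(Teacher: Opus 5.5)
Your proposal is correct and is essentially the paper's argument: the paper obtains the corollary by abstracting the proof of Theorem~\ref{wrulebworks} exactly as you do. That is, embed the fully named algebraically prime model $N$ into an arbitrary model, rule out any unnamed element $d$ via $x\neq d$ and the $I$-$\omega$ and $I$-$\forall E$ rules, and note that $v_N$ is admissible. One thing to tidy: admissibility of $v_N$ should be argued directly (since every element of $N$ is named by a $\tau$-constant, the $R$-semantic clause for $\forall$ on $N$ makes both rules sound), rather than by citing Proposition~\ref{infRmeaning}, which presupposes admissibility.
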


Without assuming an algebraically prime model the argument for Theorem~\ref{wrulebworks} yields.

\begin{theorem}\label{allmax} Fix a vocabulary $\tau$ with $\aleph_0$ constants and suppose that for any model of $T$, the substructure consisting of the elements named by those constants
is a model of $T$. Then,  no model of $T^{\omega}$ has a proper extension.
\end{theorem}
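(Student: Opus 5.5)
The plan is to rerun the argument of Theorem~\ref{wrulebworks}, with the hypothesis on $T$ standing in for the algebraically prime model. Fix a model $M$ of $T^{\omega}$ and suppose, for contradiction, that $M \subsetneq M'$ where $M'$ is also a model of $T^{\omega}$. Then $M'$ is given by an admissible valuation $v_{M'}$ for the $I$-rules of Definition~\ref{ourinfrule}. The first step is to read off from Proposition~\ref{infRmeaning} that every element of a model of $T^{\omega}$ must be named by a constant of $\Const(\tau)$. Take $d \in \Const(\tau_{M})-\Const(\tau)$ naming an element outside the set $N$ of denotations of $\tau$-constants, and apply the two rules to $\phi(x,d): x\neq d$ exactly as in Theorem~\ref{wrulebworks}. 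Every $c\in\Const(\tau)$ gives $v(c\neq d)=\true$, so the $I$-$\omega$-rule yields $(\forall x)\, x\neq d$. Then $I$-$\forall E$ with instance $d$ yields $d\neq d$, which is a contradiction. So an admissible valuation has all of its domain named by $\Const(\tau)$, and any model of $T^{\omega}$ equals its $\tau$-constant substructure.

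The second step uses the hypothesis that the substructure $N_{M}$ named by the $\tau$-constants is itself a model of $T$. This guarantees that the candidate model, namely $N_M$ with the valuation $v_{N_M}$ mapping $\Const(\tau)$ onto it, is a model of $T$. It is therefore a genuine model of $T^{\omega}$: $v_{N_M}$ is admissible, because both quantifier rules then range only over constants of $\Const(\tau)$, just as in the closing paragraph of the proof of Theorem~\ref{wrulebworks}. Hence the models of $T^{\omega}$ are exactly these $N_M$.

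Now suppose $M \subsetneq M'$ with $M'\models T^{\omega}$. By the first step, $M' = N_{M'}$, so every element of $M'$ is the denotation of some $c\in\Const(\tau)$. The same holds for $M$. Since $M$ is a substructure of $M'$, each $\tau$-constant denotes the same element in both. So the two domains coincide, contradicting properness. Equivalently, one can take $d$ naming an element of $M'-M$ and apply the $x\neq d$ argument inside $v_{M'}$.

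I expect the main obstacle to be the bookkeeping about which constant set the $I$-$\omega$-rule quantifies over. The hypothesis of the rule ranges over $\Const(\tau)$, but in the extension the rule is taken with $C=\Const(\tau_{M'})$. I must make sure that $d$ is allowed as a parameter from $C-\Const(\tau)$, and that $I$-$\forall E$ instantiates at $d$ itself. This is exactly what Remark~\ref{expmain} licenses, so I will cite it rather than re-justify it.
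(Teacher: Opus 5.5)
Your proposal is correct and follows the paper's route. The paper obtains Theorem~\ref{allmax} by rerunning the $x\neq d$ argument of Theorem~\ref{wrulebworks}, with the hypothesis that the $\tau$-constant substructure models $T$ taking the place of the algebraically prime model, just as you do. The only adjustment is bookkeeping: Definition~\ref{ourinfrule} requires $C$ to be countable, so apply the rules with $C=\Const(\tau)\cup\{d\}$ rather than $C=\Const(\tau_{M'})$, which may be uncountable.
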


The following examples show both hypotheses of Corollary~\ref{apcat} are essential. Items 1) and 2) show why we require there be infinitely many constants (or a least
infinitely many elements definable over the empty set) in 
$\tau$. Item 3) show the algebraically prime model is essential.

\begin{example}\label{neccond} {\rm
\begin{enumerate}
    \item 
The pure theory  of an infinite set  (i.e. $\tau = \{=\}$ with axioms asserting there are at least $n$ elements
for every $n< \omega$) shows the necessity  of the second requirement.  It is categorical in all cardinalities,
but the absence of $\tau$-constants make the $\omega$-rule powerless. In contrast,   Theorem~\ref{wrulebworks}  applies
in two related cases:  i) if the vocabulary $\{ =\} \cup C$ with $C$ a countably infinite set of constants with axioms $c \neq d$ for distinct constants in $C$
or ii) the theory of $(\omega, S)$
 in a finite vocabulary even without a constant in $\tau = \{S\}$ (Cf. Definition~\ref{varcon}).

\item Consider the theory of the rational order (dense linear order without endpoints) in the vocabulary $\{<\}$. Like the theory of an infinite set, Theorem~\ref{wrulebworks} does not apply. And like Example 2, there  are  continuum many countable completions each of which is categorical.

\item
The vocabulary has two unary predicates $F,G$ and countably many constants.  The (incomplete) theory $T$ asserts only that the constants are distinct.
Clearly $T$ satisfies Theorem~\ref{allmax}. But there are continuum many countable models
since any distribution of the constants among the four disjoint sets given by the boolean
combinations of $F$ and $G$ yield a  distinct model of $T$.

\end{enumerate}}
\end{example}

{\begin{remark} \label{catrat} Since Example~\ref{neccond} is a canonical example of categoricity (in $\aleph_0$), it presents in  a graphic way the problems of reference. In this case there is a natural choice of naming the constants as $c_{n,m}$ for integers $n$ and $m$.  Namely, we name the rational number $\frac{n}{m}$ with $n$ and $m$ relatively prime by $c_{n,m}$ for $n,m$ integers.
\end{remark}

\section{Categorical  structures in $L_{\omega_1 \omega}$ with an Inferential $G-\omega$-rule}\label{infinitary}
\stepcounter{subsection}

In this section  we  translate $L_{\omega_1,\omega}$-sentences to
theories in first-order logic and add  an inferential $G-\omega$-rule.  This allows us
to translate infinary sentences to finite sentences at the cost of
an infinitary rule of inference.
 Using the $\bf R$-semantics, arbitrarily many constants are available but our central arguments will only involve countable expansions of a
countable base vocabulary.

$L_{\omega_1,\omega}$ extends first order logic by allowing countable conjunctions and disjunctions of countable sets of formulas
$\Phi$ with the restriction that only finitely many free variables can occur in $\Phi$.
Permissible valuations (Definition~\ref{langdef}) now require $M \models \bigwedge \Phi$ if and only 
each $\phi \in \Phi$ is true in $M$
and dually for disjunction.





Those unfamiliar with the model theoretic notion of type are advised to read the classic \cite{Vaughtct} as well as a modern source.

We rely on Fact~\ref{mathback},
whose results are in such sources as \cite{Baldwincatmon,Keislerbook, Markerinflog}, to translate $L_{\omega_1 \omega}$ classes to classes definable in $L_{\omega,\omega}$ with our $G-\omega$-rule.
We  give a detailed proof of a variant of item 4.

Recall that a structure $M$ is an {\em atomic} model of a first order theory $T$ if for every finite sequence $\mbar$ from $M$, there is a
 formula $\phi(\ybar)$ depending only on $p= \tp(\mbar/\emptyset)$ such that $(\forall \ybar)\phi(\ybar) \rightarrow \psi(\ybar)$ for each $\psi(\ybar) \in p$.

\begin{fact}\label{mathback}
\begin{enumerate}
\item Scott's theorem: For any countable structure $A$ for a vocabulary $\tau$, there is an $L_{\omega_1,\omega}(\tau)$ sentence $\phi_A$, {\em the Scott sentence of $A$}, such that all countable models of $\phi_A$ are isomorphic to $A$. $\phi_A$ is {\em complete} in that for any sentence $\psi$ of
    $L_{\omega_1,\omega}$, $\phi_A \vdash  \psi$ or  $\phi_A \vdash \neg \psi$. This holds by the extended completeness theorem (see \cite[Sec. 2]{Bellsep} or \cite[\S 6.1]{Baldwincatmon}), the downward L\"{o}wenheim-Skolem theorem, and the uniqueness of the countable model. 
    
    \item Chang \cite[p 48]{Chang1}: For any  $L_{\omega_1,\omega}$ $\tau$-sentence $\phi$, there  is a vocabulary $\tau^\phi\supset \tau$, a first order $\tau^\phi$-theory $T^\phi$, and a countable collection of types $\Gamma$
    such that  the models of $\phi$ are exactly (in particular, no two non-isomorphic atomic 
    models of $T^\phi$ have isomorphic $\tau$-reducts)  the $\tau$-reducts of the models of $T^\phi$ that 
    omit each type in  $\Gamma$.  Moreover, if $\phi$ is complete, we can consider the reducts of the  atomic models of 
    $T^\phi$.  See \cite[\S 1.2]{Markerinflog} for the first and  Theorem 6.1.12 and Chapter 18 in  \cite{Baldwincatmon} for the atomic case. 

    \item  Applying the classical paper of \cite{Vaughtct} to $T^\phi$, $\phi$ has an uncountable 
    model if and only if   the unique (up to isomorphism) countable  model has a proper submodel 
    isomorphic to itself.

       \item  Morley: \cite{MorleyLeeds} Work in a two-sorted vocabulary $\sigma$ with  a predicate $N$, a countable set $C$ of constants each satisfying $N$, and a predicate $V$ with a $\tau^\phi$-structure from (2) such that $V\restriction \tau^\phi \models T^\phi$. 
       The principal types over the empty set of  finite sequences from $V$ are coded
 in a theory ${\hat T}^\phi$ so that some non-principal type over the emptyset is realized in $V(M)$
        in a model $M$ if and only if the type $\{x\neq c_i \wedge N(x): i<\omega\}$ is
        realized.

        Thus the reducts to $\tau$ of models of $T^\phi$ (from Chang) that arise as the $V(B)$ for  $G-\omega$-models (as defined in Definition~\ref{Morcode}) $B$ of $T^{\hat \phi}$ are exactly reducts of atomic models of $T^{\phi} $ and models of $\phi$. There is no additional rule of inference in \cite{MorleyLeeds}. (See Notation ~\ref{classnot}.5 below for $T^{\hat \phi}$).

\end{enumerate}
\end{fact}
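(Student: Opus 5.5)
The first three items are imported results, so for them I would only check that the cited hypotheses match. For item 1 I would take Scott's construction of $\phi_A$ from the back-and-forth analysis of $A$. Completeness then follows as stated: uniqueness of the countable model, downward L\"owenheim--Skolem, and the extended completeness theorem together give $\phi_A\vdash\psi$ or $\phi_A\vdash\neg\psi$. For item 2 I would use Chang's construction. Add a new relation symbol $R_\chi$ for each subformula $\chi$ of $\phi$, and write first-order axioms making $R_\chi$ commute with finitary connectives and quantifiers. The countable conjunctions and disjunctions cannot be axiomatized first-order, so for those I would put the countably many types $\Gamma$ that must be omitted. When $\phi$ is complete, its models correspond to the atomic models of $T^\phi$, as in the cited \cite{Baldwincatmon}. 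Item 3 is Vaught's criterion applied to the atomic models of $T^\phi$; it translates back through the reduct correspondence of item 2.

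The real work is item 4, which I would prove directly. Since $T^\phi$ is countable and has an atomic model, for each $n$ the principal $n$-types over $\emptyset$ are countably many. I would enumerate generating complete formulas $\theta^n_i(\ybar)$, $i<\omega$, one per principal $n$-type. The vocabulary $\sigma$ would contain:
\begin{itemize}
\item sorts or predicates $N$ and $V$, with the symbols of $\tau^\phi$ relativized to $V$;
\item constants $c_i$ in $N$;
\item for each $n$, a function $f_n$ from $V^n$ into $N$.
\end{itemize}
The theory ${\hat T}^\phi$ would contain:
\begin{itemize}
\item $T^\phi$ relativized to $V$;
\item the axioms $c_i\neq c_j$ for $i\neq j$;
\item for each $n$ and $i$, the axiom $(\forall \ybar\in V)\bigl(\theta^n_i(\ybar)\leftrightarrow f_n(\ybar)=c_i\bigr)$.
\end{itemize}
Since the $\theta^n_i$ generate distinct complete types, they are pairwise inconsistent, so these axioms are consistent.

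With this coding I would check both directions. Suppose $\mbar\in V(M)$ realizes a principal type. Then some $\theta^n_i(\mbar)$ holds, so $f_n(\mbar)=c_i$. Conversely, suppose $f_n(\mbar)=c_i$; then $\theta^n_i(\mbar)$ holds and $\tp(\mbar)$ is principal. Hence $V(M)$ realizes a non-principal type iff some $f_n(\mbar)$ lies in $N(M)\setminus\{c_i\}$. If $N(M)=\{c_i:i<\omega\}$, every $f_n(\mbar)$ is some $c_i$, so $V(M)$ is atomic. In the other direction, if $N(M)$ has an element off the $c_i$, I would expect to need an axiom making $f_n$ onto $N$ to force such an element into the range of some $f_n$. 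The fallback is to define the relevant class by requiring $\mathrm{range}(f_n)$ to avoid the non-standard part. I expect the exact form of this bookkeeping to need adjusting against Definition~\ref{Morcode}.

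The converse expansion is routine. Any atomic model $A$ of $T^\phi$ expands to a model of ${\hat T}^\phi$ with $N=\{c_i\}$, by setting $f_n(\ybar)=c_i$ for the unique $i$ with $A\models\theta^n_i(\ybar)$. Combining both directions with item 2, the $\tau$-reducts of $V(B)$ for $G$-$\omega$-models $B$ are exactly the models of $\phi$.

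The main obstacle I expect is the surjectivity issue above: a non-standard element of $N$ need not be the code of any tuple. I would first try adding, for each $n$, the axiom that $f_n$ maps onto $N$. The worry is that this is inconsistent when some $n$ has only finitely many principal $n$-types. In that case I would instead take a single coding function on a uniformly indexed sequence of tuples of all arities, with $N$ as its range.
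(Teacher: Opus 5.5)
Your route is essentially the paper's. Items 1--3 are cited there as well. For item 4, Definition~\ref{Morcode} together with the proof of Theorem~\ref{omegaruleatomic2} codes each principal type over $\emptyset$ by a constant of $N$ and gives every tuple of $V$ a code (axiom (2); you get this from totality of $f_n$). It then concludes, as you do, that when $N$ is exactly the constants every tuple realizes a principal type. Your explicit expansion of an arbitrary atomic model is a step the paper leaves implicit.

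One repair is needed. If some arity $n$ has only $k_n<\omega$ principal types, nothing constrains $f_n(\mbar)=c_j$ for $j\ge k_n$. Either add the axioms $f_n(\ybar)\neq c_j$ for those $j$, or take $T^\phi$ complete: then finitely many isolated points that are dense in $S_n(T^\phi)$ force every $n$-type to be principal. On the other hand, your single family $c_i$ with a biconditional for every $i$ is tighter than the paper's axiom (1). That axiom constrains $R^n$ only at the $c_{n,i}$, so a non-principal $n$-tuple could take its code from some $c_{m,i}$ with $m\neq n$.

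The step you leave open is the ``if'' half of the biconditional in item 4: a non-standard element of $N$ forces a non-principal type in $V$. This cannot be proved in general, and the paper does not prove it. Its remark after Definition~\ref{Morcode} that each element of $N$ codes a type ``because of axiom (2)'' reads (2) backwards, since (2) only says that every tuple has a code.

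To see that it fails, suppose $T^\phi$ is complete and $\aleph_0$-categorical (e.g.\ the Scott sentence of $(\mathbb{Q},<)$ with the Morleyized $T^\phi$). Then there are no non-principal types, as the paper itself remarks after Theorem~\ref{omegaruleatomic2}. Yet any ${\hat T}^\phi$ that is satisfied by an expansion of the atomic model and makes infinitely many constants of $N$ distinct has, by compactness, a model realizing $\{x\neq c_i\wedge N(x):i<\omega\}$. So no coding, including your fallback, yields the full biconditional.

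When $T^\phi$ is complete but not $\aleph_0$-categorical, your first idea does work if you demand surjectivity of a single $f_{n_0}$ with $S_{n_0}(T^\phi)$ infinite. That arity then has infinitely many principal types, all realized in the atomic model. What you should record is that the ``Thus'' clause of item 4, which is all Theorem~\ref{omegaruleatomic2} uses, needs only the direction you proved plus the expansion, and your argument supplies both.
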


\begin{definition}\label{catdef} We say a {\em complete} sentence $\psi$ of $L_{\omega_1,\omega}$ is {\em categorical} if it has
a unique model (up to isomorphism).
 
\end{definition}
Since $L_{\omega_1,\omega}$ satisfies the downward L\"{o}wenheim-Skolem theorem, necessarily, the unique model is countable. However, {\em mentioning that $\psi$ is a {\em sentence} is essential.}

\begin{example} The $L_{\omega_1,\omega}$ theory of the structure $(\RR,0,1,+, \times, <)$ is categorical; the only model has
cardinality $2^{\aleph_0}$.  The uncountable set of axioms  assert that for each cut in the rationals (individual rationals are named as $\frac{n}{ m}$, although there is no predicate for the set of rationals)
there is a unique point in each cut.
\end{example}



The following notation refers to the classes of models that arise in  the (Fact~\ref{mathback}.2) reduction of complete sentences of  $L_{\omega_1,\omega}$ to atomic models.

\begin{notation}[five classes of models]\label{classnot}
The class of $\tau$-structures that satisfy
\begin{enumerate}
\item $\K_\phi$ is  the class of models of the {\em complete} $L_{\omega_1,\omega}$-sentence $\phi$.
\item $\K_{T^\phi}$ those that satisfy the $\tau^\phi$-theory $T^\phi$.
\item $\K^{at}_{T^\phi}$ is those that are atomic models of the $\tau^\phi$-theory $T^\phi$.
\item $\K^{\tau}_{T^\phi}$ is those that are reducts to $\tau$ of models in $\K^{at}_{T^\phi}$.
\item $\K^{\tau}_{T^{\hat \phi}}$ is the class of $\sigma$-structures in Definition~\ref{Morcode}.
$T^{\hat \phi}$ is the first order theory of $\K^{\tau}_{T^{\hat \phi}}$.

\end{enumerate}
\end{notation}

 The Chang theorem asserts that $\K_\phi=\K^{\tau}_{T^\phi}$. Note that while $\K_{T^\phi}$ has arbitrarily large
models, $\K^{at}_{T^\phi}$ may not; although it does if it has models up to the cardinal\footnote{This cardinal is defined by induction.
$\beth_0 = \aleph_0$, $\beth_{\alpha+1} = 2^{\beth_{\alpha}}$, for limit $\delta$, $\beth_{\delta} = \bigcup_{\alpha<\delta} \beth_{\alpha}$. Morley proved \cite{Morley65a}
there are sentences of $L_{\omega_1,\omega}$ that have models only up to $\kappa$ for any $\kappa <\beth_{\omega_1}$. But any larger and there are arbitrarily large models. } $\beth_{\omega_1}$.

\begin{example}[Distinguishing the classes]\label{ordex} {\rm
Start with the structure of the integers with order $M =(\Z,<,0)$. There are two non-principal types over the empty set for the first order theory of $M$.
$p_{+\infty(x)}$  ($p_{-\infty(x)}$) says $x$ is greater (less) than $0$ and infinitely far away.  
Let $\phi$ be the $L_{\omega_1,\omega}$ sentence characterizing this structure. I.e. the
axioms for discrete linear orders that omit $p_{\pm \infty}$.  $T^\phi$ is the theory of discrete linear order, with  additional symbols $P_+(x)$
($P_-(x)$) with axioms saying $P_+(x)$
($P_-(x)$) implies $p_{+\infty(x)}$  ($p_{-\infty(x)}$) respectively. $T^\phi$ has arbitrarily large models.
But, the unique atomic model of $T^\phi$ omits both types. {Thus,  as asserted in general in Fact~\ref{mathback}.2, $\K_\phi=\K^{\tau}_{T^\phi}$.
}}
 \end{example}

 \begin{example}[Examples of categorical sentences in $L_{\omega_1,\omega}$]\label{catexs}
 \begin{enumerate}
 \item Peano arithmetic: In particular $\phi$ includes $(\forall x)\bigvee_{n<\omega} x = S^n(0)$ and the quantifier-free diagram \\ of $(N,+,\times,0,1,<)$.
 \item The theory of a single bijective function $S$ that has exactly one cycle of length $n$ for each $n$ and no $\omega$-sequence.
\item  The examples of Marcus and Knight \cite[Ex 18.9]{Baldwincatmon} of complete $L_{\omega_1,\omega}$-sentences
 that are categorical (univalent) but the unique (necessarily countable) model $N$ is not homogeneous. In particular, there is no isomorphic
 substructure of $N$.
 \end{enumerate}
 \end{example}

Any sentence of $L_{\omega_1,\omega}$ whose first order consequences satisfy the hypotheses of Theorem~\ref{allmax} has only countable models by
that theorem. In order to consider uncountable models, we turn to the notion of a (generalized) $\omega$-model (Remark~\ref{omega-logics and rules}.1) and to  a new inferential rule modifying  Definition~\ref{ourinfrule}. This variation will  allow examples with no constants in the base language $\tau$ (even $\dcl(\emptyset) =\emptyset$).

 
 \begin{definition} \label{sedef2} Let $\bK$ and $\bK^{\prime}$ be  (pseudo)-elementary classes of models  in the same vocabulary $\tau$, 
 but {\em determined} by theories $T$, $T'$
 in possibly different logics.
We say  the classes $\bK$ and $\bK^{\prime}$ are {\em structurally equivalent}, if they have the same class of models.
\end{definition}

We say `determined' because in the application $\bK'$
is the class of $\tau$-reducts of $T'$.


\begin{definition}\label{Morcode} {\rm Fix a Scott sentence $\phi$ in a vocabulary $\tau$ with a countable model $A$.  
We work in a two-sorted\footnote{Constants and variable will be restricted to specific sorts.} vocabulary $\sigma$ ($\sigma$ depends on $\tau$) with disjoint sorts $(N,V)$ where $N$ contains the set of images of  the constants $\Const(N)=_{\rm df} \langle c_{n,i}:i,n <\omega\rangle$ and $V$ consists of a
$\tau^\phi$-structure satisfying the theory $T^\phi$ constructed from $\phi$ by Fact~\ref{mathback}.2 (Chang).
Each $\tau^\phi$-constant becomes a $\sigma$-constant satisfying $V$.

Adapting \cite{MorleyLeeds}\footnote{Morley coded a single non-principal type;  we code countably many types of arbitrary finite length so we use $n+1$-relations for all $n$, rather a single binary $R$. The constants $c_{n,i}$ satisfying $N(x)$ code all $T^{\phi}$ principal types over $\emptyset$.}, we construct a theory 
 $T^{\hat \phi}$ such that a  $G-\omega$-model
of $T^{\hat \phi}$ omits each non-principal type over $\emptyset$.  In particular, if $B\models T^{\hat \phi}$, the restriction of $B$ to $V(B)$
satisfies $T^{\phi}$. 
Extend the  vocabulary $\sigma$ of $T^{\hat \phi}$ to include,
for each   $n$, an $(n+1)$-ary-relation $R^n$ on $N\times V^n$  in 2-sorted G-$\omega$-logic.
 
The theory $T^{\hat \phi}$
is given by $T^{\phi}$ relativized to $V$ along with axioms saying the $c_{n,i}$ are distinct elements of $N$;
the following axioms on the $R^n$ ensure  that the elements of $N$ code all finite  $\tau^\phi$-types  over the empty set of the
elements of $V$. 
  
 $$ (1)\  (\forall v_0) [R^n(c_{n,i} ,\vbar_0) \leftrightarrow \phi_{i}(\vbar_0)].$$ where
 $\phi_i(\vbar)$ generates the $i$th principal  $n$-type (in $\tau^\phi$) over $\emptyset$ for a given injective enumeration of those types\footnote{ $R^n(c_{n,i},\dbar)$ with $\dbar \in V$ means: $\dbar$ realizes the   $n-\tau^\phi$-type over the empty set indexed by $c_{n,i}$.}. 
    
    $$(2) \ (\forall \vbar_0) [V(\vbar_0) \rightarrow \exists v_1 [N(v_1)\wedge R^n(v_1,\vbar_0) ]$$
We call a model $B$ of $T^{\hat \phi}$ with $\Const(N)= \{c_{n,i}:i,n<\omega\}$ denoting the elements of $N(B)$ a $G-\omega$-model. 
}
\end{definition}

As we now show,  axioms (1) and (2) in Definition~\ref{Morcode} guarantee that in  a model  satisfying the $G-\omega$-rule (Definition~\ref{ourinfrule2}), each non-principal $\tau^\phi$-type over $\empty$ is omitted; thus $V(B)$ is an atomic model of $T^\phi$. Observe that each element of $N$ codes a type over the empty set because of  axiom (2).

Crucially, in Definition~\ref{ourinfrule2}  the instances of the formula in the G-$\omega$-rule must come from the
 index set $N$ and not from the structure being investigated $V$, but the relation $R$ uses parameters from $V$.

Definition ~\ref{ourinfrule2} below is a syntactic definition of the rules of inference for the quantifiers in the  $G-\omega$-logic. Proposition ~\ref{G-infRmeaning} shows that the rules for the universal quantifier uniquely determine the truth-conditions, making the rules in Definition ~\ref{ourinfrule2} categorical (see Notation ~\ref{langdef}.3.b.). As in Section ~\ref{subquant}, we do not require all constants to be used in every valuation. Thus, our rules are actually rule schema, indexed by specifying a set of constants.

\begin{definition}[Inferential $G-\omega$-rule]\label{ourinfrule2} 
$\sigma$ is a $2$-sorted vocabulary as in Definition~\ref{Morcode}; For  any countable set of constants $B\supseteq \Const(\sigma)$ and for any 
$\sigma(B)$-formula $\lambda(x,\vbar)$ and,  in particular, instances $\psi(x,\dbar)$ of a formula $\psi(x,\vbar)$  with $\lg(\vbar) =n$:

{
\small
$$
G\text{-}\omega\text{-rule}:\;
\frac{\bigwedge\{\psi(c_{n,i},\dbar): c_{n,i}\in \Const(N), n,i<\omega\}
\&
(\forall x)(\forall v_i)(\psi(x,\dbar) \rightarrow (N(x) \& \bigwedge_{i< n}V(v_i)))}
{(\forall x)(N(x) \rightarrow \psi(x,\dbar))}
$$
}

$$G-\forall E:       \frac{(\forall x) \lambda(x,\bbar)}
{\lambda (e,\bbar), \text{ for each } e \in B}.$$

        Note that there are constants in $B$ that are not in $N$, but they are not instances of the hypotheses 
        of the $G-\omega$-rule. The G-$\exists I$- and G-$\exists E$-rules are parallel to those in Definition~\ref{ourinfrule}.

\end{definition}

\begin{proposition}{(Categoricity of G-$\omega$-rule and G-$\forall E$-rule)}\label{G-infRmeaning}
The  G-$\omega$-rule and the G-$\forall E$-rule for the universal quantifier in the inferential G-$\omega$-logic uniquely determine:

For any admissible valuation $v_B$, $v_B((\forall x)(N(x)\rightarrow \psi(x, \dbar)))=\true$ iff 
$v_B(\psi(c_{n,i}))=\true$ for each $N$-constant $c_{n,i}$, provided that $v_B((\forall x)(\forall v_i)(\psi(x,\dbar) \rightarrow (N(x) \& \bigwedge_{i< n}V(v_i)))=\true$.

\end{proposition}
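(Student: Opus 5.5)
The plan is to follow the proof of Proposition~\ref{infRmeaning}. Admissibility of $v_B$ (Notation~\ref{langdef}.3.a) means exactly that $v_B$ preserves the soundness of each instance of the G-$\omega$-rule and the G-$\forall E$-rule of Definition~\ref{ourinfrule2}. The biconditional should therefore follow by reading off one direction from each rule. Throughout, fix an admissible $v_B$, a $\sigma(B)$-formula $\psi(x,\vbar)$ with $\lg(\vbar)=n$, and parameters $\dbar$. Assume the side condition
\[v_B\bigl((\forall x)(\forall v_i)(\psi(x,\dbar)\rightarrow(N(x)\,\&\,\textstyle\bigwedge_{i<n}V(v_i)))\bigr)=\true.\]

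\emph{($\Longrightarrow$)} Suppose $v_B((\forall x)(N(x)\rightarrow\psi(x,\dbar)))=\true$. Apply the G-$\forall E$-rule with $\lambda(x,\bbar)$ taken to be $N(x)\rightarrow\psi(x,\dbar)$ and instantiate at each $c_{n,i}\in\Const(N)\subseteq B$. This gives $v_B(N(c_{n,i})\rightarrow\psi(c_{n,i},\dbar))=\true$. The axioms of $T^{\hat\phi}$ (Definition~\ref{Morcode}) put each $c_{n,i}$ in $N$, and $N(c_{n,i})$ is quantifier free. So Lemma~\ref{modval} gives $v_B(N(c_{n,i}))=\true$. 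The propositional rules fixed by Assumption~\ref{propass} (modus ponens, i.e. $\rightarrow$E) then yield $v_B(\psi(c_{n,i},\dbar))=\true$ for every $N$-constant. This direction does not use the side condition.

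\emph{($\Longleftarrow$)} Suppose $v_B(\psi(c_{n,i},\dbar))=\true$ for all $n,i<\omega$. The premise of the G-$\omega$-rule is the conjunction of these instances together with the side-condition sentence. We need $v_B$ to assign truth to that premise. The finite and countable conjunctions are handled by the categorical conjunction rules (Example~\ref{conj}), together with the clause for $\bigwedge$ in permissible valuations recalled at the start of Section~\ref{infinitary}. Soundness of the G-$\omega$-rule under $v_B$ then gives $v_B((\forall x)(N(x)\rightarrow\psi(x,\dbar)))=\true$.

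The only real obstacle is bookkeeping. The first point is checking that the instances $c_{n,i}$ used in the G-$\forall E$ step genuinely lie in $B$, which holds because $B\supseteq\Const(\sigma)$. The second is that the infinitary conjunction in the premise is evaluated conjunct-wise. I would state explicitly that admissible valuations in G-$\omega$-logic inherit the permissibility clause for $\bigwedge$, so the $\&I$ argument of Example~\ref{conj} extends to countable conjunctions. With that in place, both directions are one-line applications of soundness, exactly as in Proposition~\ref{infRmeaning}.
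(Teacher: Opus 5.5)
Your proposal is correct and follows the paper's proof essentially verbatim. For ($\Longrightarrow$) you instantiate the G-$\forall E$-rule at each $c_{n,i}\in\Const(N)$ and discharge $N(c_{n,i})$; for ($\Longleftarrow$) you appeal directly to soundness of the G-$\omega$-rule under the side condition, and your extra bookkeeping (why $v_B(N(c_{n,i}))=\true$, and that the infinitary premise is evaluated conjunct-wise) only makes explicit what the paper's ``Thus'' and ``by the G-$\omega$-rule'' leave implicit.
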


\begin{proof}
$(\Longrightarrow)$ 
If  $v_B((\forall x)(N(x) \rightarrow\psi(x, \dbar)))=\true$, then $v_B(N(c_{n,i}) \rightarrow \psi(c_{n,i},\dbar))=\true$, for each $c_{n,i}$ from $\Const(N)$ (by the G-$\forall$E-rule). Thus, $v_B(\psi(c_{n,i},\dbar))=\true$ for each constant $c_{n,i}\in \Const(N)$.

$(\Longleftarrow)$ If $\psi(x,\dbar)$ is a $\sigma$-formula and $v_B(\psi(c_{n,i},\dbar))=\true$ for each $c_{n,i}\in \Const(N)$ and $v_B((\forall x)(\forall v_i)(\psi(x,\dbar) \rightarrow (N(x) \& \bigwedge_{i< n}V(v_i))) =\true$, then $v_B((\forall x)(N(x) \rightarrow \psi(x, \dbar)))=\true$ (by the G-$\omega$-rule).

\end{proof}

The argument requires that $V$ realizes only countably many $\tau^\phi$-types over the empty set; this follows from the Chang's theorem~\ref{mathback}.2 and Definition~\ref{Morcode} since each realized $\tau^\phi$ type is principal.

 \begin{theorem}\label{omegaruleatomic2} Fix a complete $L_{\omega_1,\omega}$-sentence $\phi$. 
With Definition~\ref{ourinfrule2}, if a model of $T^{\hat \phi}$ satisfies the $G-\omega$-rule 
then its  $\tau^\phi$-reduct   is an atomic model of $T^{ \phi}$. 
I.e., each  $p\in S(\emptyset)$ realized in $V$  is principal as a $\tau^\phi$-type.  Thus, by Fact~\ref{mathback}.2, the $\tau$-reducts of models of $T^{\hat \phi} + G-\omega-rule$
are exactly the models of $\phi$. That is, $\phi$ and $T^{\hat \phi} + G-\omega$-rule are structurally equivalent.
\end{theorem}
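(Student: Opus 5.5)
The plan is to argue by contradiction, in the style of the proof of Theorem~\ref{wrulebworks}, but with the parameter now coming from the sort $V$ rather than from an unnamed element of the domain. Let $B$ be a model of $T^{\hat \phi}$ whose valuation $v_B$ is admissible for the $G$-$\omega$-rule and $G$-$\forall E$. Suppose some finite tuple $\dbar$ from $V(B)$, of length $n$, realizes a non-principal $\tau^\phi$-type $p$ over $\emptyset$. We extend the vocabulary by auxiliary constants naming $\dbar$, as the rule schema allows since $B\supseteq \Const(\sigma)$ is an arbitrary countable set of constants.

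The key instance is $\psi(x,\dbar): N(x)\wedge V(\dbar)\wedge \neg R^n(x,\dbar)$. Its side hypothesis $(\forall x)(\forall \vbar)(\psi(x,\vbar)\rightarrow (N(x)\,\&\,\bigwedge_{i<n}V(v_i)))$ is a first-order validity by the choice of conjuncts, so every admissible valuation makes it true. For the main hypothesis, consider each constant $c_{n,i}$. By axiom (1), $R^n(c_{n,i},\dbar)$ holds iff $\phi_i(\dbar)$ holds. Since $\phi_i$ generates a principal type and $\tp(\dbar/\emptyset)=p$ is complete and non-principal, $\phi_i\notin p$ (otherwise $\phi_i$ would isolate $p$). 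Hence $\neg\phi_i(\dbar)$ is true, so $\psi(c_{n,i},\dbar)$ is true. For constants $c_{m,i}$ with $m\neq n$, the argument needs some bookkeeping. One option is to treat $R^n$ as false off its intended arity index, adding the axiom $\neg R^n(c_{m,i},\vbar)$ for $m\ne n$ if necessary. The other is to restrict the conjunction in the rule to the $c_{n,i}$ with the fixed $n=\lg(\vbar)$, which is how the rule indexes $n$. In either case all premises of the $G$-$\omega$-rule are true under $v_B$.

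Admissibility then gives $v_B((\forall x)(N(x)\rightarrow\psi(x,\dbar)))=\true$; equivalently, by Proposition~\ref{G-infRmeaning}, every element of $N$ fails $R^n(\cdot,\dbar)$. But axiom (2) of $T^{\hat\phi}$ provides some $e\in N(B)$ with $R^n(e,\dbar)$. Naming $e$ by a constant in $B$, the $G$-$\forall E$ rule yields $N(e)\rightarrow\psi(e,\dbar)$, hence $\neg R^n(e,\dbar)$, which is a contradiction. So every type realized in $V(B)$ is principal, and $V(B)\restriction\tau^\phi$ is an atomic model of $T^\phi$. That $V(B)\models T^\phi$ at all holds because $T^\phi$ relativized to $V$ is part of $T^{\hat\phi}$.

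For structural equivalence, Fact~\ref{mathback}.2 (with $\phi$ complete) says the $\tau$-reducts of atomic models of $T^\phi$ are exactly the models of $\phi$, which gives one inclusion. For the converse, take an atomic $M\models T^\phi$ and build $B$: let $N(B)$ consist of the constants $c_{n,i}$, interpret $R^n(c_{n,i},\bar a)$ as $M\models\phi_i(\bar a)$, and set $V(B)=M$. Axiom (1) holds by construction, and axiom (2) holds because every tuple of $M$ realizes a principal type. The $G$-$\omega$-rule is sound because $N(B)$ is exactly the named constants, so the truth conditions of Proposition~\ref{G-infRmeaning} match Robinson semantics.

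I expect the main obstacle to be the second step: checking carefully that $\psi(c,\dbar)$ holds for \emph{every} $N$-constant in the rule's hypothesis, including those indexing types of other lengths. This is where the exact formulation of the rule and of $R^n$ must be pinned down. A secondary subtlety is justifying that the parameters $\dbar$ and the witness $e$ may be named by auxiliary constants in $B$.
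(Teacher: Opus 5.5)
Your argument is essentially the paper's. A tuple $\dbar$ realizing a non-principal type satisfies $\neg R^n(c,\dbar)$ at the standard constants, the $G$-$\omega$-rule then yields $(\forall x)(N(x)\rightarrow\neg R^n(x,\dbar))$, and this contradicts the witness from axiom (2). Padding $\psi$ with $N(x)\wedge V(\dbar)$ so that the side hypothesis is valid is a harmless improvement. So is naming $\dbar$ and $e$ directly instead of passing to a countable $B'$.

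The bookkeeping point you flag is real, and the paper's proof skips it. The paper checks only $\neg R^n(c_{n,i},\dbar)$, although the rule's premise ranges over all of $\Const(N)$. With the axioms exactly as listed in Definition~\ref{Morcode}, a $G$-$\omega$-model may code a non-principal $\dbar$ by some $c_{m,i}$ with $m\neq n$. Since every element of its $N$ is named, that model satisfies the rule while $V$ is not atomic. So the axioms $(\forall\vbar)\neg R^n(c_{m,i},\vbar)$ for $m\neq n$ are genuinely needed. They hold in your converse construction once $R^n$ is declared false at those constants.

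Keep only that option. Your second option restricts the premises to the $c_{n,i}$ with $n=\lg(\vbar)$ while the conclusion still quantifies over all of $N$. That makes the rule unsound in every model of $T^{\hat \phi}$: take $\psi(x,\vbar)$ to be $N(x)\wedge\bigwedge_{j<n}V(v_j)\wedge x\neq c_{m,0}$ with $m\neq n$, so all premises hold and the conclusion fails at $c_{m,0}$. This would contradict the soundness of the rule that you rely on in the converse direction.
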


\begin{proof}
First we note that by the omitting types theorem  the countable atomic model $A$ of $T^\phi$ has an admissible valuation.
Now, for an arbitrary $B \models T^{\hat \phi}$, working in $\tau_B$,
let $v_B$ be an admissible valuation with range $B$ assigning
$c_{n,i}$ for some $n, i<\omega$ to each member of $N(A)$.
For sake of contradiction, let $B$  be a model of  $T^{\phi}$ extending $A$  that is not atomic;  thus for some $n$ there is 
an $n$-tuple $\dbar\in V(B)$
realizing a non-principal $\tau^\phi$-type $p$
over  $\emptyset$. Applying L\"{o}wenheim Skolem, choose a countable elementary submodel $B'$ of $B$ that contains $\dbar$.
By the last line of Definition~\ref{Morcode} and since the elements of $N(A)$ code 
all and only  the principal types of $T^{\phi}$, it is coded by an element $e$ of   $N(B') - N(A)$, i.e. $B'\models R^n(e,\dbar)$.
Since $p$ is non-principal, $B\models \neg R^n(c_{n,i},\dbar) \wedge N(c_{n,i})$ for each standard $c_{n,i}$. Applying the $G-\omega$-rule
with  the formula $\psi(x,\dbar)$ as $\neg R^n(v_0,\dbar)$, $B'\models (\forall v_0) (N(v_0) \rightarrow \neg R^n(v_0,\dbar))$. 
Hence, such a $\dbar$ cannot exist and so $\dbar$ realizes a principal type in $V(B')$ so in  $V(B)$. From the contradiction we conclude $V(B)$ is atomic.

Chang's theorem  shows $\bK_\phi \subseteq \bK_{{T^\phi}}^{\tau}$
(Notation~\ref{classnot}); the converse is immediate  in $G-\omega$-logic since every  model
of the $G-\omega$-rule is atomic.   So $\phi$ and $T^{\hat \phi} + G-\omega$-rule are structurally equivalent.
 \end{proof}

Note that this shows that if $T$ is a first order theory categorical in $\aleph_0$, then the associated $T^\phi$ will satisfy
the G-$\omega$-rule; there are no non-principal types to omit. This applies even to the much studied totally categorical theories
(there exists a unique up to isomorphism model in each cardinality). But none of these theories are categorical in the sense of his paper. We describe the well-known characterization of those structures
that are categorical (in $L_{\omega_1,\omega}$).

\begin{definition}\label{gendef} A structure $M$ is said to be generative if it is isomorphic to a proper substructure (equivalently extension)
of itself.
\end{definition}

Immediately from the structural equivalence (Definition ~\ref{sedef2}, Theorem ~\ref{omegaruleatomic2}):
\begin{corollary}\label{coupdegras} $\phi$ is categorical if and only if $T^{\hat \phi}$ is categorical in $G-\omega$-logic if and only if the countable model of $\phi$ is non-generative.
\end{corollary}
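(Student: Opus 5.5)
The corollary is a chain of two equivalences, and I will prove them separately. The first comes from the structural equivalence of Theorem~\ref{omegaruleatomic2}. The second comes from the Vaught-style analysis recorded in Fact~\ref{mathback}.3.

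For the first equivalence, Theorem~\ref{omegaruleatomic2} says that $\bK_\phi$ equals the class of $\tau$-reducts of $V(B)$ for models $B$ of $T^{\hat \phi}$ satisfying the $G-\omega$-rule. I read ``$T^{\hat\phi}$ is categorical in $G-\omega$-logic'' as: all such models have isomorphic reducts to the relevant vocabulary. The $N$-sort is forced to be exactly the denotations of $\Const(N)$ by Proposition~\ref{G-infRmeaning}, so only the $V$-part can vary.

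The two directions then go as follows.

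\begin{itemize}
\item If $\phi$ has a unique model, every admissible model of $T^{\hat\phi}$ has the same $V$-part up to isomorphism on $\tau$. Lifting this to $\tau^\phi$ uses the parenthetical in Fact~\ref{mathback}.2: non-isomorphic atomic models of $T^\phi$ have non-isomorphic $\tau$-reducts. The $R^n$-relations are then determined by axiom (1), so the whole $\sigma$-structure is unique.
\item Conversely, two non-isomorphic models of $\phi$ are reducts of two atomic models of $T^\phi$. Each can be expanded to a $G-\omega$-model of $T^{\hat\phi}$: adjoin the standard $N$ and interpret $R^n$ via axiom (1). These expansions witness non-categoricity.
\end{itemize}

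I expect the main obstacle to be checking that each such expansion actually satisfies the $G-\omega$-rule, i.e.\ is admissible. For this I will rerun the argument of Theorem~\ref{omegaruleatomic2} in reverse. Atomicity means every tuple is coded by a standard $c_{n,i}$. Hence an instance $\psi(x,\dbar)$ that holds at all standard constants holds at all of $N$, since $N$ has no other elements.

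For the second equivalence, recall that $\phi$ is complete, so it has a unique countable model $A$ (Fact~\ref{mathback}.1 and the downward L\"owenheim--Skolem theorem). Thus $\phi$ is categorical exactly when it has no uncountable model. By Fact~\ref{mathback}.3, $\phi$ has an uncountable model iff $A$ has a proper submodel isomorphic to itself, which by Definition~\ref{gendef} is exactly generativity. Contraposing gives: categorical iff $A$ is non-generative.

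One small point needs care here. Definition~\ref{gendef} says ``substructure'', while Fact~\ref{mathback}.3 says ``submodel''. These agree because any substructure isomorphic to $A$ is itself a model of $\phi$; it is a submodel in the sense of the atomic models of $T^\phi$ after expanding via Chang's theorem. I will note this alignment explicitly and otherwise cite Fact~\ref{mathback}.3 as stated.
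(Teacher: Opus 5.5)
Your decomposition matches the paper's. The first equivalence is read off from the structural equivalence of Theorem~\ref{omegaruleatomic2}. The second is Vaught's chain argument, which you cite as Fact~\ref{mathback}.3 and which the paper sketches as an $\omega_1$-chain of pairwise isomorphic \emph{elementary} extensions.

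The gap is in the ``small point'' at the end, and your resolution of it is false. Suppose $A'\subsetneq A$ is a $\tau$-substructure with $A'\cong A$. Then $A'\models\phi$, and its Chang expansion is an atomic model of $T^\phi$. But that expansion need not be the restriction of the Chang expansion of $A$: the inclusion need not be a $\tau^\phi$-embedding, i.e.\ need not be elementary for the countable fragment generated by $\phi$. Vaught's argument needs exactly this. The union of a chain of models of $\phi$ is again a model of $\phi$ when the links are fragment-elementary, not when they are mere substructures.

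Concretely, let $\phi$ be the Scott sentence of $(\omega,<)$. It implies that every element has only finitely many predecessors, so every model is countable and $\phi$ is categorical. Yet $(\omega,<)\cong(\omega\setminus\{0\},<)$, a proper substructure. This inclusion is not even first-order elementary: $1$ has a predecessor in $\omega$ but none in $\omega\setminus\{0\}$. Moreover, the chain of substructures $\{m\in\Z: m\geq -n\}$, $n<\omega$, each isomorphic to $(\omega,<)$, has union $(\Z,<)$, which is not a model of $\phi$. So with your alignment, ``categorical $\Rightarrow$ non-generative'' is false. The argument breaks precisely at ``generative, hence Fact~\ref{mathback}.3 yields an uncountable model''.

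The repair is to read Definition~\ref{gendef} the way the proof uses it. $A$ should be isomorphic to a proper substructure $A'$ such that $A'\subseteq A$ is elementary for the fragment generated by $\phi$ (equivalently, the inclusion is a $\tau^\phi$-embedding of the Chang expansions). Then Fact~\ref{mathback}.3 gives both directions directly, and the contrapositive you wrote goes through unchanged.

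A lesser point concerns your first equivalence. Proposition~\ref{G-infRmeaning} only fixes the truth conditions of $N$-relativized universal sentences. It does not show that $N(B)$ has no non-standard elements; the proof of Theorem~\ref{omegaruleatomic2} explicitly entertains $e\in N(B')-N(A)$. The $x\neq d$ trick of Theorem~\ref{wrulebworks} is unavailable here, because $d$ would be a parameter from $N$, not $V$. Standardness of $N$ is instead built into the definition of a $G$-$\omega$-model at the end of Definition~\ref{Morcode}. The paper treats the first equivalence as immediate from structural equivalence, reading categoricity of $T^{\hat\phi}$ on $\tau$-reducts. On that reading you need not lift to the whole $\sigma$-structure at all.
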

\begin{proof} Only the second equivalence needs argument. By Theorem~\ref{omegaruleatomic2}  both $M$ and $M_1$ are atomic (and without loss countable).
So $M_1 \approx M$.  But then following \cite{Vaughtct} we can build an uncountable chain $\langle M_\alpha: \alpha < \omega_1$ of pairwise isomorphic elementary extensions $\langle M_\alpha:\alpha < \omega_1\rangle$. So $M_{\omega_1}$ is uncountable and violates categoricity.
\end{proof}

Any complete sentence of $L_{\omega_1,\omega}$ is $\aleph_0$-categorical. But it will be categorical only if the countable model is not generative.
In particular, the standard model of arithmetic  is atomic since for any finite sequence  $\abar\in N$,
$\bigwedge_{i<n} v_i =a_i \rightarrow \qtp_{\emptyset}(a_0, \ldots a_{n-1})$; the model is in the definable closure of
the empty set.

The structure $(\Z,+,0)$, the free abelian group on one generator,  is certainly canonical and so a target for a categoricity theorem. Since there is only one constant term, it cannot be proved categorical by the methods
of Section~\ref{subquant}. But is easily axiomatized in $L_{\omega_1,\omega}$ as a torsion free Abelian group generated
by a single element $x$ satisfying $\bigwedge_{w_1,w_2} (w_1(x) = w_2(x)\rightarrow (\forall y) (w_1(y) = w_2(y))$ where
$w_1,w_2$ range over all words  with a single argument.

 Example~\ref{catexs}.2 shows that unlike Peano Arithmetic, there are simple examples of $L_{\omega_1,\omega}$ categorical structures that are not in the definable closure of a finite set.

\section{Conclusion.}

We studied in this paper two main  notions of categoricity: the  categoricity of a logical calculus and the categoricity of a first-order theory formalized in it. Each logical calculus was considered as an inferential logical system with natural deduction rules. By results of Carnap, the standard axiomatic and natural deduction calculi for classical propositional and first-order logic are known to be non-categorical, i.e. they allow non-standard admissible valuations that provide some of the logical connectives and quantifiers with non-standard truth-conditions. 

Our solution for obtaining a categorical formalization of the first-order quantifiers was to introduce a new form of a well-known infinitary $\omega$-rule of inference,  the $I-\omega$-rule. With the $I-\omega$-rule we obtained a categorical formalization of the first-order universal quantifier and, at the same time, we proved that all first-order theories that have an algebraically prime model in which all elements are named by constants are categorical when formalized in the inferential $I-\omega$-logic. Similar, we proved in the two-sorted generalized $\omega$-logic that
 each  complete $L_{\omega_1,\omega}$ sentence defines the same class of structures
as a corresponding first-order theory with the $G-\omega$-rule.
Moreover, we proved (Corollary~\ref{coupdegras}) that a sentence  $\phi$ of $L_{\omega_1,\omega}$ (and so the associated  (Theorem~\ref{omegaruleatomic2}) theory in $G-\omega$-logic) is categorical just if 
the countable model of $\phi$ has no proper isomorphic extension modeling 
$\phi$. As the extended logics satisfy the downward L\"{o}wenheim Skolem theorem, all the categorical structures are countable.

In the  sequel, \cite{BaldwinBrincusII}, we address the doxological challenge  and circularity objections to `first order'  axiomatizations of arithmetic posed by \cite{ButtonWalshbook}.

\bibliography{cbjb.bib}

@book{Baldwincatmon,
author ={John T. Baldwin},
title = "Categoricity" ,
year ={2009},
publisher ="American Mathematical Society, Providence, USA",
series = "University Lecture Notes",
note={The monograph is free at: http:
//homepages.math.uic.edu/˜jbaldwin/pub/AEClec.pdf},
number = "51"
}

@article{BaldwinBrincusII,
  author  = {Baldwin, John T. and Br{\^i}ncu{\c{s}}, Constantin C.},
  title   = {Carnapian Frameworks and Categoricity of Arithmetic via Inferential $\omega$-logics},
  year    = {2026},
  note    = {Manuscript} 
}

@book{modthelog,
TITLE = " Model-Theoretic Logics",
publisher = " Springer-Verlag",
year = {1985},
editor =  "Barwise,J. and Feferman,S."
}

@incollection{Bellsep,
    author = "Bell, John L.",
    title = "Infinitary Logic",
    booktitle = "The Stanford Encyclopedia of Philosophy (Fall 2023 Edition)",
    editor  ="Zalta, Edward N.  and Nodelman, Uri ",
    publisher ="https://plato.stanford.edu/archives/fall2023/entries/logic-infinitary/",
    year = "2023"}

@article{BrincusS,
title ="Are the open-ended rules for negation categorical?",
author ="Br{\^i}ncu{\c{s}}, Constantin C.",
journal ="Synthese",
volume = {198},
pages ={7249–7256},
year ={2021},}

@article{Brincus,
title ="Categorical Quantification",
author ="Br{\^i}ncu{\c{s}}, Constantin C.",
journal ="The Bulletin of Symbolic Logic",
volume = {30},
pages ={227-252},
year ={2024},}

@incollection{Brincus2,
author = "Br{\^i}ncu{\c{s}}, Constantin C.",
title = "Inferential Quantification and the $\omega$-Rule",
booktitle = "Perspectives on Deduction: Contemporary Studies in the Philosophy, History and Formal Theories of Deduction",
year ="2024",
editor ={Piccolomini d'Aragona, A.},
publisher = {Synthese Library, vol 481. Springer, Cham},
pages ="345-372",}

@book{ButtonWalshbook,
title ="Philosophy and Model Theory",
year = {2018},
author ="Button, Tim and Walsh, Sean ",
publisher ={Oxford University Press},
pages = {xvi+517}
}

@book{CarnapLog,
title = "Logical Syntax of Language",
author = {Carnap, Rudolf},
publisher = "Trench, Trubner and Co Ltdr",
address ={London},
year =  "1934/37"}

@book{CarnapFor,
title ="Formalization of Logic",
author ={Carnap, Rudolf},
publisher ="Harvard University Press",
address ={Cambridge},
year = "1943"}

@incollection{Chang1,
author ={C. C. Chang},
title =" Some Remarks on the model theory of Infinitary Languages",
booktitle ="The syntax and semantics of infinitary languages",
editor = {J.~Barwise },
pages ={36-64},
publisher = {Springer-Verlag},
note ={LNM 72},
year ={1968}}

@book{ChangKeisler,
author ="Chang,C.C. and Keisler, H.J.",
year = "1990",
title = "Model theory",
publisher = "North-Holland",
note ="3rd impression 1992",
pages ="XII+550"}

@book{Dedekind,
title = "Essays on the Theory of Numbers",
year = "1888/1963"                               ,
publisher = "Dover"                 ,
address ={Mineola, New York},
author = "Richard Dedekind" ,
note = "first published by Open Court publications 1901: first German edition 1888, translated
from 2nd German edition 1893 which contains the study of arithmetic",
pages = "115"}

@incollection{Ebbing,
author ={Ebbinghaus, H.D.},
title ="
Extended Logics: The General Framework",
booktitle ="Model-Theoretic Logics", 
editor = {J.~Barwise and S.~Feferman},
pages ={25-76},
publisher = {Springer-Verlag},
year ={1985}}

@incollection{Eastaughsep,
    author = "Eastaugh, Benedict",
    title = "Reverse Mathematics",
    booktitle = "The Stanford Encyclopedia of Philosophy (Summer 2024 Edition)",
    editor  ="Zalta, Edward N.  and Nodelman, Uri ",
    publisher ={\url{https://plato.stanford.edu/archives/sum2024/entries/reverse-mathematics/}},
    year = "2024"
}

@book{Forbes,
title ="Modern Logic. A Text in Elementary Symbolic Logic",
author ={Forbes, Graeme},
publisher ="Oxford University Press",
address ={New York},
year = "1994"}

@article{Franzen,
AUTHOR = "Torkel Franz{\'e}n",
YEAR = "2004" ,
TITLE = "Transfinite Progressions: A Second look at completeness",
journal = {Bulletin of Symbolic Logic },
VOLUME = {10} ,
PAGES = {367-389}
}

@incollection{Gentzen,
author ="Gerhard Gentzen",
title="Investigations into Logical Deduction",
pages = "68-131",
booktitle = "The Collected Papers of Gerhard Gentzen",
year = "1969"                               ,
publisher = "North-Holland, Amsterdam",
note ="German original published in 1934",
editor = "M. E. Szabo "}

@book{Garson,
title ={What Logics Mean: From Proof-Theory to Model-Theoretic Semantics},
author = {Garson, James},
publisher ="Cambridge University Press",
address = {Cambridge},
year = "2013"}

@article{Henkinomt,
author = " Leon Henkin",
title = "A generalization of the notion of $\omega$-consistent",
JOURNAL = {Journal of Symbolic Logic},
volume = {19},
pages = "183-196",
year = "1954"}

@unpublished{Ketland,
title="Completeness of {PA} with omega rule",
author="Ketland, Jeffrey",
year = "2011",
note = {\url{https://m-phi.blogspot.com/2011/03/completeness-of-pa-with-omega-rule.html}}
}

@book{Halbach,
    author ="Volker Halbach" ,
    title = "The Logic Manual",
    publisher = "Oxford University Press" ,
    year = "2010"
}

@incollection{Isaacson,
    author = "Daniel Isaacson",
    title = "Some considerations on arithmetical truth and the $\omega$-rule",
    editor = "M. Detlefsen",
    booktitle = "Proof, logic and formalization",
    publisher = "Routledge",
    year = "1992",
    pages ="94-138", }

@book{Keislerbook,
author =" Keisler,H.J",
year = "1971",
title = "Model theory for Infinitary Logic",
publisher = "North-Holland",
pages ="X+ 208"
}

@book{Kunen,
title = "Set {T}heory, {A}n {I}ntroduction to {I}ndependence {P}roofs",
year = " 1980"                               ,
publisher = "North Holland",
author = "K. Kunen"}

@article{Lavine,
    author = "Shaughan Lavine" ,
    title = " Quantification and Ontology",
    journal = " Synthese",
    volume = " 124",
    pages =" 1-43",
    year = " 2000",}

@book{Markerbook,
author ="Marker,D.", 
title ="Model Theory: An {I}ntroduction", 
publisher ="Springer-Verlag", 
year ="2002" }

@book{Markerinflog,
title ={Lectures on Infinitary Model Theory},
author={Marker,D.},
series ={Lectures Notes in Logic},
publisher = {Association of Symbolic Logic, Cambridge University Press},
address = {Cambridge},
year = {2016},
pages ={viii +183}
}

@incollection{Morley65a,
author = "Morley, M.",
title = "Omitting Classes of Elements",
booktitle = "The Theory of Models",
year =" 1965",
editor = "Addison and  Henkin and Tarski",
publisher = "North-Holland",
address ="Amsterdam",
pages ="265-273"}

@incollection{MorleyLeeds,
author = "Morley, M.",
title = "Partitions and Models",
booktitle = "Proceedings of the Summer School in Logic Leeds, 1967",
year =" 1968",
editor ={M.H. L\"{o}b},
note= {paper written by Vivienne Morley},
publisher = {Springer-Verlag},
addresss ={Berlin-Heidelberg-New York},
pages ="109-158"}

@article{Orey,
author = " S. Orey"   ,
year = "1956"                          ,
title = "On $\omega$-consistency and related properties",
journal = "Journal of Symbolic Logic",
volume = {21 },
pages =" 246-252    "
}

@incollection{PelletierHazen,
    author = " Pelletier, Francis Jeffry and Hazen, Allen P.",
    title = "A History of Natural Deduction",
    booktitle = "Handbook of the History of Logic",
    volume = "11; Logic: A Hystory of its Central Concepts",
    editors = "Dov M. Gabbay, Francis Jeffry Pelletier, John Woods",
    publisher = "Elsevier, North-Holland",
    year = "2012",
    pages = "341-414"
}

@article{Rosser,
author = "Rosser, Barkley"   ,
year = "1937"                          ,
title = "Gödel Theorems for Non-Constructive Logics",
journal = "Journal of Symbolic Logic",
volume = {2:3},
pages ="129-137"
}

@book{Sacks,
    author = {Sacks, G.E.},
    title = {Saturated Model Theory},
    publisher = {Reading, Massachusetts: W A Benjamin},
    year = {1972}
}

@book{Shelahbook               ,
author = " Shelah,S."           ,
year = " 1978"                   ,
title = " Classification
{T}heory and the {N}umber of {N}onisomorphic {M}odels" ,
PUBLISHER = " North-Holland"  ,
pages = " XVI+544"              ,
reviews = " MR 81a:03030"
}

@incollection{Shapiro,
booktitle = "Handbook of Philosophical Logic",
author = {Shapiro, Stewart },
title = {Systems between First-Order and Second-Order Logics},
volume = {1},
year = " 2001"                               ,
editor = "D.M. Gabbay, F. Guenthner (eds) ",
publisher = "Springer",
address ={Dordrecht},
pages = "131-187",
}

@book{Shoenfield,
 title ="Mathematical {L}ogic",
  author ={Joseph Shoenfield},
  year ={1967},
  publisher ={Addison-Wesley},
   pages ={344}}

@article{Shoenfieldomega,
author ="Shoenfield,  Joseph R.",
title ="On a restricted $\omega$-rule",
journal= "Bull. Acad. Polon. Sci. Ser. Sci. Math. Astr. Phys.",
year = {1959},
volume ={ 7},
pages ={405-407}}

@book{ShoSmiley,
    author = {D.J. Shoesmith and T.J. Smiley},
    title = {Multiple-Conclusion Logic} ,
    publisher = "{Cambridge Unversity Press}",
    year = {1978}
}

@book{Simpson             ,
author = " Simpson ,S.G."         ,
year= "2009"                   ,
title = " Subsystems of Second Order Arithmetic",
PUBLISHER = "Cambridge"  ,
pages = "              xvi + 444"
}

@article{Skolem34,
title ="{\"U}ber  die Nicht-charakterisierbarkeit der Zahlenreihe mittels endlich oder ab{\"a}hlbar unendlich vieler Aussagen
 mit ausschliesslich Zahlenvariablen",
author ="Skolem,  Thoralf",
journal ="Fundamenta Mathematicae",
volume ={23},
year ={1934},
pages={150-161}
}

@book{Smith,
    author = "Peter Smith",
    title = "An Introduction to Formal Logic, 2nd edition",
    publisher = "Cambridge University Pres",
    note = "Reprinted with corrections, Logic Matters, August 2020, https://www.logicmatters.net/ifl/",
    year = "2020"}

@article{Speitel,
    author = "Speitel, Sebastian G.W.",
    title = "Carnap’s (Categoricity) Problem",
    journal = "The Bulletin of Symbolic Logic",
    note = "10.1017/bsl.2025.10083",
    year = "2025",    
}

@book{TMR     ,
author = "Tarski, Alfred and   Mostowski, Andrezej and  Robinson, Raphael",
title = "Undecidable Theories",
publisher ={North Holland},
note ="First edition: Oxford 1953",
year = "1968"
}

@article{TarskiVaught      ,
author = "A. Tarski and R.L. Vaught",
title = "Arithmetical extensions of relational systems  ",
volume = "13"   ,
pages = " 81-102",
journal = " Compositio Mathematica",
year = "1956"
}

@article{Vaughtct,
author = " Vaught, R.L.",
title = " Models of complete theories",
journal= "Bull. Amer. Math. Soc.",
publisher = "American Mathematical Society",
editior = "",
NOTE = { \url{http://homepages.math.uic.edu/~jbaldwin/pub/vaught59.pdf}},
pages = "299-313",
year = "1963"
}
\bibliographystyle{alpha}
%
%
%
\end{document}